\newtheorem{theorem}{Theorem}[section]
\newtheorem{lemma}[theorem]{Lemma}
\newtheorem{proposition}[theorem]{Proposition}
\theoremstyle{definition}
\newtheorem{definition}[theorem]{Definition}
\theoremstyle{remark}
\newtheorem{remark}{Remark}
\numberwithin{equation}{section}
\newcommand{\D}{{\, \rm d}}
\newcommand{\eps}{\varepsilon}
\newcommand{\R}{\mathbb{R}}
\newcommand{\omu}{\nu}
\newcommand{\bJ}{\mathbf{J}}
\newcommand{\bK}{\mathbf{K}}
\newcommand{\bI}{\mathbf{I}}
\newcommand{\bH}{\mathbf{H}}
\newcommand{\bL}{\mathbf{L}}
\newcommand{\bD}{\mathbf{D}}
\newcommand{\bu}{\mathbf{u}}
\newcommand{\be}{\mathbf{e}}
\newcommand{\mH}{\mathcal{H}}
\title{Analysis of a non local model for spontaneous cell polarisation}
\author{Vincent Calvez\thanks{Unit\'e de Math\'ematiques Pures et Appliqu\'ees, CNRS UMR 5669 \& \'equipe-projet INRIA NUMED, \'Ecole Normale Sup\'erieure de Lyon, 46 all\'ee d'Italie, F-69364 Lyon, France.  ({\tt vincent.calvez@umpa.ens-lyon.fr})} \and Rhoda J. Hawkins\thanks{Laboratoire de la mati\`ere condens\'ee, CNRS UMR 7600, Universit\'e Pierre et Marie Curie, 4 Place Jussieu, 75255 Paris Cedex 05 France ({\tt rhoda@lptmc.jussieu.fr})} \and Nicolas Meunier\thanks{MAP5, CNRS UMR 8145, Universit\'{e} Paris Descartes, 45 rue des Saints  P\`{e}res
75006 Paris,
France. ({\tt nicolas.meunier@parisdescartes.fr})} \and Raphael Voituriez.
        \thanks{Laboratoire de la mati\`ere condens\'ee, CNRS UMR 7600, Universit\'e Pierre et Marie Curie, 4 Place Jussieu, 75255 Paris Cedex 05 France ({\tt voiturie@lptmc.jussieu.fr})}}
\begin{document}

\maketitle


\begin{abstract}
In this work, we investigate the dynamics of a non-local model describing spontaneous cell polarisation. It consists in a drift-diffusion equation set in the half-space, with the coupling involving the trace value on the boundary. We characterize the following behaviours in the one-dimensional case: solutions are global if the mass is below the critical mass and they blow-up in finite time above the critical mass. The higher-dimensional case is also discussed. The results are reminiscent of the classical Keller-Segel system in double the dimension. In addition, in the one-dimensional case we prove quantitative convergence results using relative entropy techniques.  This work is complemented with a more realistic model that takes into account dynamical exchange of molecular content at the boundary. In the one-dimensional case we prove that blow-up is prevented. Furthermore, density converges towards a non trivial stationary configuration.
\end{abstract}



\section{Introduction}

Cell polarisation refers generically to a process that enables a cell to switch from a spherically symmetric shape to a state with a prefered axis. Such a phenomenon 
is an essential step for many biological processes and is involved for instance in cell migration, division, or morphogenesis. While the precise biochemical basis of polarisation can vary greatly,  in its early stages polarisation  is always characterised by an inhomogeneous distribution of specific molecular markers. Cell polarisation can be driven by an external asymmetric signal as in the example of chemotaxis, where  a chemical gradient imposes the direction of migration of cells  \cite{alberts}. Another example is given by mating yeast, for which the external signal is a pheromone gradient, which causes the cell to grow an elongation known as a shmoo in the direction of the pheromone source \cite{alberts}.  However observations show that some cellular systems, such as mating yeast, can also polarise spontaneously in absence of external gradients \cite{Wedlich-Soldner2003}. These two distinct polarisation processes, {\it driven} or {\it spontaneous}, are necessary for  cells to fulfil different biological functions. However, so far the conditions under which  a cell can polarise spontaneously or only in response to an external asymmetric forcing   are not well understood.

The molecular basis of  cell polarisation has been much discussed in the biological literature over the past decade, and is likely to involve several processes. It is now widely recognised that the cell cytoskeleton plays a crucial role in cell polarisation. The cell cytoskeleton is a network of long semiflexible filaments made up of protein subunits (mainly actin or microtubules). These filaments act as roads along which motor proteins are able to perform a biased ballistic motion and carry various molecules, in a process which consumes  the chemical energy of adenosine triphosphate ATP.  It is observed that the efficiency of formation of polar caps in yeast, indicating polarisation, is reduced when actin transport is disrupted, and that the polar caps formed are unstable \cite{Wedlich-Soldner2003,Wedlich-Soldner2004,Irazoqui2005}. In the case of neurons, it has been shown that the  polarisation of the growth cone is suppressed when microtubules are depolymerised \cite{Bouzigues2007}. To account for these observations, it is generally argued that the cytoskeleton filaments mediate an effective positive feedback in the dynamics of polarisation markers \cite{Wedlich-Soldner2003}. This arises from the molecular markers not only diffusing in the cell cytoplasm, but also being actively transported by molecular motors along cytoskeleton filaments, the dynamic organisation of which is regulated by the markers themselves.

From the physical point of view, achieving an inhomogeneous  distribution of diffusing molecules without an external asymmetric field as in the case of spontaneous polarisation requires either an interaction between the molecules or a driving force that maintains the system out of equilibrium. In the case of the cell cytoskeleton, it is well known that the hydrolysis of ATP acts as a sustained energy input which drives the system out of equilibrium, and one can therefore hypothesizes on general grounds that spontaneous polarisation in cells stems from non equilibrium processes. Cell polarisation has been the subject of a few theoretical studies  in recent years. Many models rely on reaction-diffusion systems in which polarisation emerges as a type of Turing instability \cite{Iglesias2008,Levine2006,Onsum2007} and some (e.g. \cite{Onsum2007,Wedlich-Soldner2003}) include cytoskeleton proteins as a regulatory factor. However,  the full dynamics of markers  is  generally not considered.

In this article, following the work of \cite{HBPV}, we study a class of models for spontaneous cell polarisation. These models couple the evolution of molecular markers with the dynamics of the cytoskeleton. Namely the markers are assumed to diffuse in the cytoplasm and to be actively transported along the cytoskeleton. The density of molecular markers is denoted by $n(t,x)$. The advection field is denoted by $\bu(t,x)$. This field is obtained through a coupling with the boundary value of $n(t,x)$.

The cell is figured by the half-space $\mH = \R^{N-1}\times (0,+\infty)$. We denote the space variable $x = (y,z)$. The time evolution of the molecular markers follows an advection-diffusion equation:
\begin{equation}\label{eq:2D model}
\partial_t n(t,x) = \Delta n(t,x) - \nabla\cdot\left( n(t,x) {\bf u}(t,x)\right) \, , \quad \, t>0\, , \quad x\in \mH \, .
\end{equation}

\subsection{The one-dimensional case}

We first analyse two different models set on the half-line $(0,+\infty)$. In the simplified version, the advection field is given by $\bu (t,z) = - n(t,0)$. Active transport arises at uniform speed, the speed being given by the value of the density at $z = 0$.

\subsubsection{The simplified model}

The model writes as follows.
\begin{equation}\label{eq1D}
\partial _t n(t,z) = \partial _{zz} n(t,z) +n(t,0) \partial _z n(t,z)\, , \quad t >0\, , \, z\in (0,+\infty)\, ,
\end{equation}
together with the zero-flux boundary condition at $z = 0$:
\begin{equation}\label{cl1D}
\partial _z n (t,0)+n(t,0)^2=0\, . 
\end{equation}
We have formally conservation of molecular content:
\[M =\int_{z>0} n_0(z)\D z = \int_{z>0} n(t,z)\D  z  \, . \]

Solutions of (\ref{eq1D}) may become unbounded in finite time (so-called blow-up). This occurs if the mass $M$ is above the critical mass: $M>1$. In the case $M< 1$, the solution converges to 0. In the critical case $M = 1$ there exists a family of stationary states parametrized by the first moment. The solution converges to the stationary state corresponding to the first moment of the initial condition $\int_{z>0} z n_0(z)\D z$.

\begin{theorem}[Global existence and asymptotic behaviour in the sub-critical and critical cases: $M\leq1$] \label{th:1D} 
Assume that the initial data $n_0$ satisfies both $n_0 \in L^1(( 1 + z)\D z)$ and $\int_{z>0} n_0(z) (\log n_0(z))_+ \D z< + \infty$. Assume in addition that $M\leq 1$, then there exists a global weak solution (in the sense of Definition \ref{def:weak}) that satisfies the following estimates for all $T>0$,
\begin{eqnarray*}
\sup_{t\in (0,T)} \int_{z>0} n(t,z) (\log n(t,z))_+ \D z &<& +\infty\, ,\\
\int_0^T\int_{z>0}n(t,z) \left( \partial_z \log n(t,z) \right)^2 \D  z \D t &<& +\infty\, . 
\end{eqnarray*}
In the sub-critical case $M<1$ the solution strongly converges in $L^1$ towards the self-similar profile $G$ given by (\ref{eq:stat state rescaled}) in the following sense: 
\[ \lim_{t\to +\infty }\left\|n(t,z) - \frac{1}{\sqrt{1+ 2t}} G\left(\frac{ z}{\sqrt{1+ 2t}}\right) \right\|_{L^1} = 0\, .  \]
In the critical case $M = 1$, assuming in addition that the second moment is finite $\int_{z>0} z^2n_0(z)\D z<+\infty$, the solution strongly converges in $L^1$ towards a stationary state $\alpha \exp(-\alpha z)$, where $\alpha^{-1} = \int_{z>0} z n_0(z)\D  z$.\\
\end{theorem}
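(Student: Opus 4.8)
The plan is to separate the global-existence claim, which is a matter of a priori estimates plus an approximation scheme, from the long-time behaviour, which I would attack by relative-entropy methods. Three identities drive everything. Writing $\mu(t)=n(t,0)$, equation (\ref{eq1D}) is the conservation law $\partial_t n=-\partial_z J$ with flux $J=-(\partial_z n+\mu n)$, and the boundary condition (\ref{cl1D}) says exactly $J(t,0)=0$, so mass is conserved. Testing against $z$ and against $\log n$ and using $J(t,0)=0$ gives the first-moment law $\frac{d}{dt}\int_{z>0}zn\D z=\mu(t)(1-M)$ and the entropy law $\frac{d}{dt}\int_{z>0}n\log n\D z=-\int_{z>0}n(\partial_z\log n)^2\D z+\mu(t)^2$. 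The single inequality that makes the critical mass surface is the trace estimate $\mu(t)=-\int_{z>0}\partial_z n\D z\le\left(\int_{z>0}n\D z\right)^{1/2}\left(\int_{z>0}n(\partial_z\log n)^2\D z\right)^{1/2}$, i.e. $\mu^2\le M\int_{z>0}n(\partial_z\log n)^2\D z$, whence $\frac{d}{dt}\int n\log n\le-(1-M)\int n(\partial_z\log n)^2\le0$ as soon as $M\le1$.

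For global existence I would run a regularised problem (truncating and mollifying the nonlocal coefficient $\mu$ and the data), solve it locally by standard theory, and pass to the limit. The entropy law supplies the two required estimates: $\int n\log n$ is nonincreasing, hence bounded above by its initial value, and combining this with the bound on $\int(1+z)n\D z$ (finite throughout since the first moment stays finite, growing at most linearly) controls the negative part of the entropy, giving $\sup_{(0,T)}\int n(\log n)_+\D z<\infty$; integrating the entropy law in time then yields $\int_0^T\int n(\partial_z\log n)^2<\infty$, with the strict gap $(1-M)$ doing the work when $M<1$ and the finiteness of the second moment (assumed in the critical part) supplying the missing integrability of $\mu^2$ when $M=1$. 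These bounds furnish the tightness and compactness needed to pass to the limit in the weak formulation (Definition~\ref{def:weak}).

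For the subcritical asymptotics I would pass to self-similar variables $\tau=\tfrac12\log(1+2t)$, $\xi=ze^{-\tau}$, $g(\tau,\xi)=e^{\tau}n(t,z)$, turning (\ref{eq1D}) into $\partial_\tau g=\partial_\xi(\partial_\xi g+\xi g+g(\tau,0)g)$ with the same zero-flux condition, whose stationary state is $G(\xi)=\beta e^{-\xi^2/2-\beta\xi}$, the constant $\beta=G(0)$ being fixed by $\int G=M$ (solvable precisely because $M<1$). Using $\partial_\xi\log G=-\xi-\beta$ and $J(\tau,0)=0$, the relative entropy $\mathcal H[g\mid G]=\int g\log(g/G)$ dissipates as $\frac{d}{d\tau}\mathcal H=-I(g\mid G)+\delta^2-\delta(m_1-m_1^\ast)$, where $I(g\mid G)=\int g(\partial_\xi\log(g/G))^2$ is the relative Fisher information, $\delta=g(\tau,0)-\beta$, and $m_1-m_1^\ast$ is the deviation of the first moment from equilibrium. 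The clean term $-I(g\mid G)$ is bounded below by a logarithmic Sobolev inequality for $G$ (legitimate since $V=\xi^2/2+\beta\xi$ is uniformly convex, so Bakry--Émery applies), the first moment obeys the contractive law $\frac{d}{d\tau}(m_1-m_1^\ast)=-(m_1-m_1^\ast)+\delta(1-M)$, and forming $\mathcal H+\tfrac{1}{2(1-M)}(m_1-m_1^\ast)^2$ while absorbing $\delta^2$ into a fraction of $I(g\mid G)$ via a trace bound $\delta^2\le\theta\,I(g\mid G)$, $\theta<1$, closes the estimate and forces $\mathcal H\to0$. The Csiszár--Kullback--Pinsker inequality converts this into $L^1$ convergence, which is scale invariant and so holds in the original variables.

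In the critical case $M=1$ no rescaling is needed: the first-moment law degenerates to $\frac{d}{dt}\int zn=0$, freezing the first moment at $\int zn_0\D z=\alpha^{-1}$, which is exactly what selects $n_\infty=\alpha e^{-\alpha z}$ among the stationary family. Since mass and first moment are conserved, the linear-in-$n$ terms in $\mathcal H[n\mid n_\infty]$ are constant, so $\frac{d}{dt}\mathcal H$ equals the entropy law and rearranges into $\frac{d}{dt}\mathcal H=-\bigl(I(n\mid n_\infty)-(\mu-\alpha)^2\bigr)$, a quantity the trace inequality shows is nonnegative and vanishing only at $n_\infty$. The main obstacle is concentrated here: the reference $n_\infty$ is exponential, with flat potential $V=\alpha z$, so Bakry--Émery fails and I cannot expect the dissipation $I(n\mid n_\infty)-(\mu-\alpha)^2$ to control $\mathcal H$ with a spectral gap; proving even a weak or modified entropy--entropy-dissipation inequality with the boundary correction $(\mu-\alpha)^2$ subtracted is the delicate point. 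As the statement asks only for convergence, I would instead close it by compactness: the uniform entropy and moment bounds give precompactness in $L^1$, the monotonicity of $\mathcal H$ together with $\mathcal D\ge0$ vanishing only at $n_\infty$ identifies every element of the $\omega$-limit set as a stationary state, and conservation of mass and first moment singles out $n_\infty$, yielding the full $L^1$ convergence — the assumed finite second moment guaranteeing $\mathcal H[n_0\mid n_\infty]<\infty$ and the tightness that prevents mass escaping to infinity.
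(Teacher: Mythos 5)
Your overall architecture coincides with the paper's: entropy dissipation plus the Cauchy--Schwarz trace inequality $n(t,0)^2\le M\int n(\partial_z\log n)^2$ for the a priori bound, a regularized problem and compactness for existence, self-similar variables with the coupled Lyapunov functional $\bH+\tfrac{1}{2(1-M)}(\bJ-\bJ^*)^2$ for $M<1$, and relative entropy for $M=1$. However, three steps do not close as you describe. First, the a priori estimate in the critical case: at $M=1$ the entropy inequality degenerates to $\frac{\D}{\D t}\int n\log n\le 0$ and by itself controls neither the dissipation nor $\int_0^T n(t,0)^2\D t$, since the two contributions are exactly balanced. Your proposal to extract the ``missing integrability of $\mu^2$'' from the second moment fails: the identity $\tfrac12\frac{\D}{\D t}\int z^2 n\,\D z=1-n(t,0)\bJ(0)$ yields only $n(\cdot,0)\in L^1(0,T)$, not $L^2(0,T)$, and the theorem does not even assume a finite second moment for the existence part. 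The paper's device is the superlinear function $\Lambda$ with $\Lambda'(u)=(\log u)_+^{1/2}$, giving $\Lambda(n(t,0))^2\le C_0\int n(\partial_z\log n)^2$ and hence $\int_{\{n(t,0)\ge A\}}n(t,0)^2\D t<\infty$; some such ingredient is indispensable. (Relatedly, to pass to the limit in the trace $n^\eps(t,0)$ you need uniform convergence of traces, which the paper manufactures from the entropy bounds via the modulus-of-continuity Lemma \ref{lem:I(N)} before invoking Aubin--Simon; ``tightness and compactness'' is not enough to handle this boundary nonlinearity.)

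Second, your quantitative closure in the sub-critical case is not justified, and in fact the authors list the convergence \emph{rate} for $M<1$ as an open problem. The trace inequality does not give $\delta^2\le\theta\,I(g\mid G)$ with $\theta<1$: since $\int g\,(\partial_\xi\log g+\xi+\beta)\,\D\xi=-\delta+(m_1-m_1^*)$, Cauchy--Schwarz yields only $(\delta-(m_1-m_1^*))^2\le M\,I(g\mid G)$, and when you try to absorb $\delta^2$ against $I$ and $(m_1-m_1^*)^2/(1-M)$ in the coupled functional the admissible constants meet exactly at the threshold: one obtains $\bD\ge 0$ but not $\bD\ge c\,\bL$, so Bakry--\'Emery cannot be invoked to conclude. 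The paper instead proves only qualitative convergence, by the time-translation/compactness scheme (the dissipation vanishes exactly on $G_\alpha$), supplemented by the bound $\int u(\partial_y\log u+y+u(0))^2\ge (M+M^{-1}-2)u(0)^2-2M$ to control the trace. Third, in the critical case your identification of the limit is incomplete: the first moment is conserved along the flow but only lower semicontinuous under the available convergence, so mass in the first moment can escape to $z=+\infty$ and one gets only $\alpha^{-1}\ge\overline{\alpha}^{-1}$; your appeal to tightness would require a uniform-in-time second moment bound, which is not available since $\frac{\D}{\D t}\bI$ need not be nonpositive. The paper resolves exactly this point with the argument on $\limsup\,(\bI(N+1)-\bI(N))$ combined with Jensen's inequality, and a comparable device is needed to legitimately single out $\alpha=\bJ(0)^{-1}$.
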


\begin{theorem}[Blow-up of weak solutions: $M>1$] \label{th:1D BU} Assume $M>1$. Any weak solution with non-increasing initial data $n_0$ blows-up in finite time.\\
\end{theorem}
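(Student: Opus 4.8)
The plan is to follow the classical virial strategy adapted to this boundary coupling: control the first moment $m_1(t)=\int_{z>0} z\,n(t,z)\D z$ and show that the supercritical assumption $M>1$ forces $m_1$ to reach zero in finite time, which is incompatible with positivity and mass conservation and signals concentration of mass at the boundary $z=0$. The exact threshold $M=1$ should drop out of the computation, so the first moment (rather than the second, as in planar Keller--Segel) is the right quantity here.

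First I would establish that monotonicity is preserved: if $n_0$ is non-increasing then so is $n(t,\cdot)$. Setting $p=\partial_z n$ and differentiating (\ref{eq1D}) in $z$ gives $\partial_t p=\partial_{zz}p+n(t,0)\,\partial_z p$, a linear advection--diffusion equation whose advection coefficient $n(t,0)$ depends on $t$ only. The boundary condition (\ref{cl1D}) reads $p(t,0)=-n(t,0)^2\le 0$, and $p(0,\cdot)\le 0$ by hypothesis, so the parabolic maximum principle yields $p(t,\cdot)\le 0$ for all $t$; in particular $n(t,0)=\sup_{z>0}n(t,z)$. From $0\le n(t,z)\le n(t,0)$ together with $\int n=M$ and $\int z\,n=m_1$, a bathtub (rearrangement) argument shows the first moment is minimized by the indicator profile $n(t,0)\,\mathds{1}_{[0,M/n(t,0)]}$, which gives the lower bound
\[
n(t,0)\ \ge\ \frac{M^2}{2\,m_1(t)}\,.
\]

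Next I would multiply (\ref{eq1D}) by $z$ and integrate. Integrating by parts twice, using the decay at infinity and mass conservation $\int n=M$ (itself a consequence of (\ref{cl1D})), all boundary terms collapse and one obtains
\[
\frac{\D}{\D t}\,m_1(t)\ =\ n(t,0)\,(1-M)\,.
\]
Since $M>1$, combining this identity with the lower bound gives $\tfrac{\D}{\D t}\big(m_1^2\big)\le -\,M^2(M-1)<0$, hence $m_1(t)^2\le m_1(0)^2-M^2(M-1)\,t$. The right-hand side becomes negative at the finite time $t^\star=m_1(0)^2\big/\big(M^2(M-1)\big)$, contradicting $m_1\ge 0$; therefore the solution cannot retain finite, positive, spreading mass up to $t^\star$ and must blow up before then.

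The main obstacle is rigour rather than algebra. One must justify that $m_1(t)$ is finite and differentiable along weak solutions and that the double integration by parts is licit, i.e. that $z\,\partial_z n$ and $z\,n$ vanish at infinity; this requires propagating first-moment control and approximating the weight $z$ by admissible compactly supported test functions in the weak formulation. The maximum-principle argument for monotonicity must likewise be made valid at the level of weak solutions, presumably via a regularisation, and initial data with infinite first moment should be treated by a truncation or monotone-approximation argument. Finally, the conclusion is best phrased as a contradiction with the existence of a solution retaining the above estimates, so that \emph{blow-up in finite time} is identified precisely with the breakdown of these bounds at or before $t^\star$.
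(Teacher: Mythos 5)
Your proposal is correct and follows essentially the same route as the paper: preservation of monotonicity by the maximum principle applied to $\partial_z n$, the interpolation bound $n(t,0)\ge M^2/(2\bJ(t))$ (which the paper gets from Jensen's inequality applied to the probability density $-\partial_z n/n(t,0)$ rather than a bathtub rearrangement, but it is the same estimate), the first-moment identity $\frac{\D}{\D t}\bJ=(1-M)n(t,0)$, and the conclusion that $\bJ(t)^2$ decreases at least linearly, forcing a contradiction in finite time. The only technical difference is that the paper works with the integral form of the moment identity (Lemma \ref{Moment growth}) and an auxiliary majorant $\bK(t)$ to avoid differentiating $\bJ$ along weak solutions --- precisely the rigour point you flag at the end.
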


In the present biological context, blow-up of solutions is interpreted as polarisation of the cell. Indeed there is a strong instability driving the system towards an inhomogeneous state. 

In Section \ref{secvariants}, we present analogous blow-up results in the case of a finite interval $z \in (0,L)$ or finite range of action. 

\begin{remark}
Such a critical mass phenomenon (global existence {\em versus} blow-up) has been widely studied for the Keller-Segel system (also known as the Smoluchowski-Poisson system) in two dimensions of space \cite{BDP,P}. The equation (\ref{eq1D}) represents in some sense a caricatural version of the classical Keller-Segel system in the half-line $(0,+\infty)$. Note that there exist other ways to mimick the two dimensional case in one dimension \cite{CPS,CieslakLaurencot}. 
\end{remark}

\begin{remark}
There is a strong connection between the equation under interest here (\ref{eq1D}) and the one-dimensional Stefan problem. The later writes indeed \cite{HV1}:
\[\left\{\begin{array}{l}
\partial_t u(t,z) = \partial_{zz} u(t,z) \, , \quad \, t>0\, , \, z\in (-\infty,s(t))\, , \\
\lim_{z\to -\infty}\partial_zu (t,z) = 0 \, , \quad u(t,s(t)) = 0\, , \quad \partial_z u (t,s(t)) = -s'(t)\, .
\end{array}\right.\]
The temperature is initially non-negative: $u(0,z) = u_0(z)\geq 0$. By performing the following change of variables: $\phi(t,z) = - u(t,s(t)-z)$, we get an equation that is linked to (\ref{eq:1D}) by $n(t,z) = \partial_z \phi(t,z)$. This connection provides some insights concerning the possible continuation of solutions after blow-up \cite{HV1}. This question has raised a lot of interest in the past recent years \cite{HV2,V1,V2,DS}. It is postulated in \cite{HV1} that the one-dimensional Stefan problem is generically non continuable after the blow-up time. 
\end{remark}

\subsubsection{The model with dynamical exchange of markers at the boundary}

The boundary condition (\ref{cl1D}) turns out to be unrealistic from a biophysical viewpoint. This claim is emphasized by the possible occurence of blow-up in finite time. On the way towards a more realistic model, we distinguish between cytoplasmic content $n(t,z)$ and the concentration of trapped molecule on the boundary at $z=0$: $\mu(t)$. Then the exchange of molecules at the boundary is described by very simple kinetics: 
\[\frac {\D}{\D t} \mu(t)= n(t,0)- \gamma \mu(t)\, .\]
The transport speed is modified accordingly: $\bu(t,z) = - \mu(t)$. 
The model writes:
\[\left\{\begin{array}{l} 
\partial _t n (t,z)=\partial _{zz} n (t,z) +\mu (t) \partial _z n (t,z) \, , \quad t >0\, , \, z\in (0,+\infty) \medskip\\ 
\partial _z n (t,0)+\mu(t) n(t,0) = \frac d{dt} \mu(t) \, .
\end{array}\right. \]

The flux condition on the boundary ensures the conservation of molecular content. Denoting $m(t) = \int_{z>0} n(t,z)\D  z$ the partial mass of cytoplasmic markers, we have:
\[M = \mu_0 + m_0 = \mu(t) +  m(t) \, .\]

Since the transport speed is bounded, $\mu(t)\leq M$, we clearly have global existence of solutions for any mass $M>0$. We can precise the asymptotic behaviour in the super-critical case $M>1$. This is the purpose of the following Theorem.\\

\begin{theorem}\label{th:long time critical1} Assume that the initial data $n_0$ satisfies both $n_0 \in L^1(( 1 + z)\D z)$ and $\int_{z>0} n_0(z) (\log n_0(z))_+ \D z< + \infty$. Assume the mass is super-critical $M> 1$. The partial mass $m(t)$ converges to 1 and the density $n(t,z)$ strongly converges in $L^1$ towards the exponential profile $(M-1) e^{-(M-1)z}$.
\end{theorem}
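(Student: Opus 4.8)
The plan is to treat the long-time limit as relaxation towards a genuine stationary state, identified first from the conservation law, and then to exhibit a free energy whose dissipation forces convergence. A stationary pair $(n_\infty,\mu_\infty)$ has bulk flux equal to the boundary flux $\tfrac{\D}{\D t}\mu=0$, hence $\partial_z n_\infty+\mu_\infty n_\infty\equiv 0$ and $n_\infty(z)=n_\infty(0)e^{-\mu_\infty z}$; the boundary balance then gives $n_\infty(0)=\mu_\infty$, so the bulk mass is exactly $1$ and the conservation identity $\mu(t)+m(t)=M$ forces $\mu_\infty=M-1$ and $n_\infty(z)=(M-1)e^{-(M-1)z}$. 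Thus proving $m(t)\to 1$ is equivalent to $\mu(t)\to M-1$. Since $0\le\mu(t)\le M$ the drift is bounded and, as already noted, solutions are global; I would first propagate from the hypotheses on $n_0$ the uniform-in-time control of the mass, the first moment $\int_{z>0}z\,n\,\D z$ and the entropy $\int_{z>0}n(\log n)_+\,\D z$ (exactly as in Theorem \ref{th:1D}), which yields relative compactness of the trajectory $\{n(t,\cdot)\}$ in $L^1$ and legitimises the trace $n(t,0)$ and the integrations by parts below (after a standard regularisation of the weak solution of Definition \ref{def:weak}).

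The core of the argument is the combined free energy
\[\mathcal F(t)=\int_{z>0}\Bigl(n\log\tfrac{n}{n_\infty}-n+n_\infty\Bigr)\D z+\Bigl(\mu\log\tfrac{\mu}{\mu_\infty}-\mu+\mu_\infty\Bigr)\ \ge\ 0 .\]
Using the divergence form $\partial_t n=\partial_z(\partial_z n+\mu n)$, the flux identity $\bigl(\partial_z n+\mu n\bigr)(t,0)=\tfrac{\D}{\D t}\mu=n(t,0)-\mu$, and the scalar kinetics, a direct computation — differentiating, integrating by parts once, and completing the square with $\partial_z\log n+\mu_\infty$ — gives the dissipation identity
\[\frac{\D}{\D t}\mathcal F(t)=-\int_{z>0} n\bigl(\partial_z\log n+\mu_\infty\bigr)^2\D z-\bigl(n(t,0)-\mu\bigr)\log\tfrac{n(t,0)}{\mu}+(\mu-\mu_\infty)\bigl(n(t,0)-\mu_\infty m\bigr).\]
The linear corrections $-n+n_\infty$ and $-\mu+\mu_\infty$ are inserted precisely so that, by $\mu+m=M=1+\mu_\infty$, $\mathcal F$ is a sum of two Gibbs relative entropies, hence nonnegative and vanishing only at $(n_\infty,\mu_\infty)$. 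The first term on the right is the \emph{drift-adjusted Fisher information} and the second is a nonpositive boundary-exchange contribution; together they form the genuine dissipation.

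The single obstruction is the last, indefinite term $(\mu-\mu_\infty)(n(t,0)-\mu_\infty m)$, both of whose factors vanish at equilibrium. Using conservation in the form $\mu-\mu_\infty=1-m=\int_{z>0}(n_\infty-n)\,\D z$ controls the first factor in $L^1$; the delicate point, and where the hard analysis lies, is to absorb the trace quantity $n(t,0)-\mu_\infty m$ into the two negative terms, since the boundary value is only weakly controlled. Here I would invoke a logarithmic Sobolev / weighted Poincar\'e inequality for the measure $e^{-\mu_\infty z}$ to bound the relative entropy, and hence $\|n-n_\infty\|_{L^1}$ and $|1-m|$, by the Fisher information, together with a trace (Hardy-type) inequality estimating $|n(t,0)-\mu_\infty m|$ by the same dissipation, so that by Young's inequality the product is dominated by a small multiple of the two nonpositive terms plus an integrable remainder. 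Granting this, integrating the identity yields $\int_0^{\infty}\!\!\int_{z>0}n(\partial_z\log n+\mu_\infty)^2\,\D z\,\D t<\infty$ together with $\int_0^\infty(n(t,0)-\mu)\log\tfrac{n(t,0)}{\mu}\,\D t<\infty$. A LaSalle-type argument then extracts $t_k\to\infty$ along which the Fisher information tends to zero, forcing $n(t_k,\cdot)\to C\,e^{-\mu_\infty z}$, with $C$ pinned to $\mu_\infty$ by mass conservation and $\mu(t_k)\to\mu_\infty$. Finally I would upgrade subsequential to full convergence via monotonicity of $\mathcal F$ and the Csisz\'ar–Kullback–Pinsker inequality $\|n-n_\infty\|_{L^1}^2\lesssim \mathcal F$, which simultaneously gives strong $L^1$ convergence of $n(t,\cdot)$ to $(M-1)e^{-(M-1)z}$ and $m(t)\to 1$.
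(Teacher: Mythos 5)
Your setup (identification of the limit profile from the stationary flux balance, global existence from the bound $\mu\le M$, and the Csisz\'ar--Kullback endgame) matches the paper, and your dissipation identity for $\mathcal F$ is essentially correct. The gap is in the one step you defer: controlling the indefinite term $(\mu-\mu_\infty)\bigl(n(t,0)-\mu_\infty m\bigr)$. Using $m=M-\mu$ and $1+\mu_\infty=M$ one has the exact identity $n(t,0)-\mu_\infty m=\frac{\D}{\D t}\mu+M(\mu-\mu_\infty)$, so your remainder equals $\frac12\frac{\D}{\D t}(\mu-\mu_\infty)^2+M(\mu-\mu_\infty)^2$: after moving the total derivative into the functional you are still left with the \emph{positive} term $M(\mu-\mu_\infty)^2$, which is of the same order as the quantity you are trying to show decays. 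No log-Sobolev, Hardy-trace or Young argument can absorb it into the two nonpositive terms ``with a small constant'' unless you already know $\mu$ is close to $\mu_\infty$; and a weighted log-Sobolev inequality bounding the relative entropy by the drift-adjusted Fisher information centred at $\mu_\infty$ is not available here because of the boundary exchange term. So as written the monotonicity of $\mathcal F$ (and hence the integrability of the dissipation and the LaSalle argument) is not established.

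The paper closes exactly this point by algebra rather than by functional inequalities (Lemma \ref{eq:dissipation mu}). Its Lyapunov functional is, up to additive constants, your $\mathcal F$ plus the quadratic correction $\frac12(\mu-\nu)^2$, and the key move is to complete the square in the Fisher information around the \emph{time-dependent} centre $n(t,0)/m(t)$ rather than around $\mu_\infty$. The piece released by this re-centring, $m\bigl(\nu-n(t,0)/m\bigr)^2$, combines exactly with $2(\mu-\nu)\frac{\D}{\D t}\mu+M(\mu-\nu)^2$ to give $m\bigl(n(t,0)/m-\mu\bigr)^2+\mu(\mu-\nu)^2$, so the full dissipation becomes a sum of four manifestly nonnegative terms
\[
\int_{z>0} n\Bigl(\partial_z\log n+\tfrac{n(t,0)}{m}\Bigr)^2\D z+m\Bigl(\tfrac{n(t,0)}{m}-\mu\Bigr)^2+\bigl(n(t,0)-\mu\bigr)\log\tfrac{n(t,0)}{\mu}+\mu(\mu-\nu)^2 ,
\]
whose vanishing directly forces $\mu=\nu$, $n(t,0)=\mu m$ and $n$ exponential. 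If you replace your absorption step by this correction and regrouping, the rest of your argument goes through as in the critical case $M=1$ of the paper.
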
 

\subsection{The higher-dimensional case}

In the higher dimensional case $N\geq 2$ we only partially analyse simplified models such as \eqref{eq1D} where the transport speed is directly computed from the trace value $n(t,y,0)$. Equation (\ref{eq:2D model}) is complemented with the zero-flux boundary condition:
\begin{equation}\label{cl:ajout:nico}
\partial_z n (t,y,0) - n(t,y,0)\bu (t,y,0)\cdot  {\bf e}_z =0\,, \quad y \in \R^{N-1}\, . 
\end{equation}
We have formally conservation of the molecular content:
\[M  =\int_{\mH} n_0(x)\D  x = \int_{\mH} n(t,x)\D  x\, . \]

Following \cite{HBPV} we make the distinction between two possible choices for the advection speed ${\bf u}$.
In the {\bf transversal case}, the field $\bu$ is normal to the boundary:
\begin{equation}\label{eq:u1}
\bu(t,y,z)  = - n(t,y,0)  {\bf e}_z\, . 
\end{equation}
This corresponds to a particular orientation of the cytoskeleton, modelling the microtubules. Indeed microtubules are very rigid filaments whose bending length is larger than the typical size of yeast cells. 

In the {\bf potential case}, the field $\bu$ derives from a harmonic potential. The source term of the potential is located on the boundary:
\begin{equation}\label{eq:u2}
\bu(t,x)  = \nabla c(t,x) \, , \quad \mbox{where}\quad \left\{\begin{array}{rl} -\Delta c(t,x) &= 0\, ,\medskip \\ - \partial_z c(t,y,0) &= n(t,y,0)\, . \end{array}\right.
\end{equation}
This corresponds to another orientation of the cytoskeleton, modelling the actin network. Indeed the actin networks is a diffusive network where orientations are mixed up.  
In dimension $N=1$, observe that the two choices (\ref{eq:u1}) and (\ref{eq:u2}) coincide. 

In dimension $N\geq 2$, we state global existence for small initial data. The criteria are identical for the two possible choices of the advection field (\ref{eq:u1}) or (\ref{eq:u2}). This is a consequence of the two common features: both fields are divergence free and possess the same normal component at the boundary.\\ 

\begin{theorem}[Global existence in dimension $N\geq 2$]\label{thdim2}
Assume that the advection field satisfies the two following conditions: $\nabla\cdot \bu \geq 0$ and $\bu(t,y,0)\cdot \be_z = n(t,y,0)$. Assume that the initial data $n_0$ satisfies both $n_0 \in L^1(( 1 + |x|^2)\D x)$ and $\|n_0\|_{L^N}$ is smaller than some constant $c_N$ depending only on the dimension $N$. Then there exists a global weak solution to (\ref{eq:2D model}) and (\ref{cl:ajout:nico}). \\ 
\end{theorem}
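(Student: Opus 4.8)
The plan is to construct the solution as a limit of regularised approximations, the entire difficulty being concentrated in a single a priori estimate: the control of $\|n(t)\|_{L^N(\mH)}$, which closes precisely because $\|n_0\|_{L^N}$ is small. First I would set up an approximate problem — for instance by mollifying/truncating the velocity field and adding a small vanishing regularisation, solved on large truncated domains $\{|x|<R\}\cap\mH$ — chosen so that the approximants are smooth, nonnegative, conserve mass, and preserve the two structural identities $\Div\bu\geq 0$ and $\bu\cdot\be_z = n$ on the boundary. On these smooth approximations all the integrations by parts below are licit, and the task reduces to propagating the smallness uniformly in the regularisation parameter.

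The core computation is the $L^N$ energy estimate. Multiplying \eqref{eq:2D model} by $N n^{N-1}$ and integrating over $\mH$, the diffusion term produces the interior dissipation $-\tfrac{4(N-1)}{N}\int_{\mH}|\nabla(n^{N/2})|^2\D x$ together with a boundary contribution, while the drift term, after one integration by parts, yields the favourable interior term $-(N-1)\int_{\mH}n^N\,\Div\bu\,\D x\leq 0$ (this is where $\Div\bu\geq 0$ is used) plus further boundary traces. Evaluating all the boundary contributions by means of the zero-flux condition \eqref{cl:ajout:nico} and the prescribed normal component $\bu\cdot\be_z = n$, they combine into a single term proportional to $\int_{\R^{N-1}}n^{N+1}\D y$, leaving
\[
\frac{\D}{\D t}\int_{\mH}n^N\D x + \frac{4(N-1)}{N}\int_{\mH}\left|\nabla\!\left(n^{N/2}\right)\right|^2\D x \;\leq\; C_N\int_{\R^{N-1}}n^{N+1}\D y .
\]

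The decisive step is to dominate this boundary integral by the interior dissipation. I would invoke the scale-invariant trace Gagliardo--Nirenberg--Sobolev inequality
\[
\int_{\R^{N-1}}|v|^{2(N+1)/N}\D y \;\leq\; C\,\|\nabla v\|_{L^2(\mH)}^2\,\|v\|_{L^2(\mH)}^{2/N},
\]
valid for $v\in H^1(\mH)$ and every $N\geq 2$ (the exponent $2(N+1)/N$ is subcritical for the trace embedding $H^1(\mH)\hookrightarrow L^q(\R^{N-1})$, which is why no smallness on the gradient is lost). Applying it to $v=n^{N/2}$ gives $\int_{\R^{N-1}}n^{N+1}\D y\leq C\,\|\nabla(n^{N/2})\|_{L^2(\mH)}^2\,\|n\|_{L^N(\mH)}$, so the estimate becomes
\[
\frac{\D}{\D t}\,\|n\|_{L^N}^N + \left(\frac{4(N-1)}{N} - C_N\,\|n\|_{L^N}\right)\left\|\nabla\!\left(n^{N/2}\right)\right\|_{L^2(\mH)}^2 \;\leq\; 0 .
\]
Defining $c_N$ so that the bracket is positive whenever $\|n\|_{L^N}<c_N$, a standard continuation argument shows that $\|n(t)\|_{L^N}$ can never increase beyond its initial value, hence stays below $c_N$ for all time, and the integrated dissipation $\int_0^T\|\nabla(n^{N/2})\|_{L^2}^2\D t$ is then globally bounded.

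Finally I would pass to the limit. The uniform $L^\infty_t L^N_x$ bound, together with conservation of mass and the second-moment control inherited from $n_0\in L^1((1+|x|^2)\D x)$ (yielding tightness and preventing escape of mass to infinity) and the entropy bound, provides the compactness needed: I would bootstrap the $L^N$ estimate to higher integrability, extract space--time compactness via an Aubin--Lions argument, and pass to the limit in the weak formulation of \eqref{eq:2D model}--\eqref{cl:ajout:nico}. I expect the main obstacle to be twofold. First, establishing the trace inequality with a workable constant so that $c_N$ is explicit and dimension-dependent only. Second, and more seriously, making the formal absorption rigorous at the level of the approximants — ensuring the smallness is genuinely propagated by the scheme — and setting up the correct functional framework for weak solutions subject to the quadratic flux boundary condition, so that one may pass to the limit in the nonlinear drift $\nabla\cdot(n\bu)$ and in the boundary nonlinearity $n^2$.
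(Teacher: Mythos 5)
Your proposal is correct and follows essentially the same route as the paper: the $L^N$ energy estimate with the favourable term from $\Div\bu\geq 0$, the boundary term $\int_{\R^{N-1}}n^{N+1}\D y$ absorbed into the dissipation via a trace inequality yielding $\int n^{N+1}\D y\lesssim\|\nabla(n^{N/2})\|_{L^2}^2\,\|n\|_{L^N}$, and propagation of the smallness of $\|n_0\|_{L^N}$. The only cosmetic difference is that you quote the interpolated $L^2$-based trace Gagliardo--Nirenberg inequality directly, whereas the paper derives the same bound from the critical $W^{1,r}$ trace Sobolev inequality with $r=2N/(N+1)$ combined with H\"older.
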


Notice that  both conditions $\nabla\cdot \bu \geq 0$ and $\bu(t,y,0)\cdot \be_z = n(t,y,0)$ are fulfilled in (\ref{eq:u1}) and (\ref{eq:u2}).\\

\begin{theorem}[Blow-up in dimension $N\geq 2$]\label{th2dim2}
Assume that $n(t,x)$ is a strong solution to (\ref{eq:2D model}) which verifies:
\begin{itemize}
\item $\partial_z n(t,x) \leq 0$ for all $x\in \mH$ and $t>0$ when the advective field is given by (\ref{eq:u1}),
\item $\partial_z n(t,x) \leq 0$ and for all $x\in \mH$ and $t>0$, the matrix $A(t,x) = x \, \otimes \,  \partial_z \nabla_y \log n(t,x)$ satisfies $A^T + A \geq 0$ (in the matrix sense)  when the advective field is given by (\ref{eq:u2}).
\end{itemize} Assume in addition that the second momentum is initially small enough: there exists a constant  $C_N$ depending only on the dimension such that $\int_{x\in \mH } |x|^2 n_0(x)\D  x \leq C_N M^{\frac{N+1}{N-1}} $. Then the maximal time of existence of the solution is finite.\\
\end{theorem}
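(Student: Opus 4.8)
The natural approach is the virial, or second-moment, method. I would work with $I(t)=\int_{\mH}|x|^2 n(t,x)\D x\ge 0$ and show that the smallness hypothesis forces $\frac{\D}{\D t}I$ to become, and remain, negative, so negative in fact that $I$ is driven to $0$ in finite time; since $I\ge0$ for any genuine nonnegative solution, this contradicts global existence and yields $T^\ast<+\infty$. The first step is to differentiate $I$ along \eqref{eq:2D model} and integrate by parts. The diffusion term gives, by Green's formula, the bulk contribution $\int_\mH\Delta(|x|^2)n\D x=2NM$ plus a boundary term $-\int_{\R^{N-1}}|y|^2\partial_z n(t,y,0)\D y$ (the normal derivative of $|x|^2$ vanishes on $\{z=0\}$), while the advection term $-\int_\mH|x|^2\nabla\cdot(n\bu)\D x$ gives a bulk contribution $2\int_\mH n\,(x\cdot\bu)\D x$ and a boundary term that, by the zero-flux condition \eqref{cl:ajout:nico}, exactly cancels the diffusive one. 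This yields the clean identity $\frac{\D}{\D t}I=2NM+2\int_\mH n\,(x\cdot\bu)\D x$, valid in both cases since both advection fields are divergence free.

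In the transversal case \eqref{eq:u1} one has $x\cdot\bu=-z\,n(t,y,0)$, so the key quantity is $\int_{\R^{N-1}}n(t,y,0)\big(\int_0^\infty z\,n(t,y,z)\D z\big)\D y$. Here I would exploit the sign hypothesis $\partial_z n\le 0$ slice by slice: for a nonincreasing profile $f\ge0$ the tail $F(z)=\int_z^\infty f$ satisfies $F(z)\ge F(0)-f(0)z$, which integrates (using $\int_0^\infty zf\D z=\int_0^\infty F\D z$) to $2f(0)\int_0^\infty zf\D z\ge(\int_0^\infty f\D z)^2$. Applying this with $f=n(t,y,\cdot)$ and setting $\rho(y)=\int_0^\infty n(t,y,z)\D z$ gives $\frac{\D}{\D t}I\le 2NM-\int_{\R^{N-1}}\rho(y)^2\D y$. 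Since $\int\rho=M$ and $\int|y|^2\rho\le I$, a concentration estimate (split the mass at radius $R$, bound the outer part by $R^{-2}I$ and the inner part by Cauchy--Schwarz, then optimize in $R$) yields $\int_{\R^{N-1}}\rho^2\ge c_N M^{(N+3)/2}I^{-(N-1)/2}$, whence $\frac{\D}{\D t}I\le 2NM-c_N M^{(N+3)/2}I^{-(N-1)/2}$.

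The potential case \eqref{eq:u2} is the main obstacle, and is precisely where true symmetrisation of the kernel fails, because the second moment sees the bulk density $n$ whereas the potential $c$ is generated by the trace $n(t,y,0)$. Since $\bu=\nabla c$ has the same normal trace on $\{z=0\}$ as the transversal field, I would split $x\cdot\bu=z\,\partial_z c+y\cdot\nabla_y c$: the normal part is handled as above, while the genuinely new tangential part $\int_\mH n\,(y\cdot\nabla_y c)\D x$ must be controlled. Using the explicit half-space Neumann representation of $c$ in terms of the trace and integrating by parts, this term produces a quadratic expression whose sign is governed precisely by the hypothesis $A^T+A\ge0$ with $A=x\otimes\partial_z\nabla_y\log n$; this is exactly the structural assumption that prevents tangential transport from destroying the negativity and recovers an estimate of the same form as in the transversal case. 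Decoding and correctly exploiting this matrix condition, together with rigorously justifying the integrations by parts and the decay at infinity for strong solutions, is the delicate point.

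Finally, the hypothesis $\int_{\mH}|x|^2 n_0\D x\le C_N M^{(N+1)/(N-1)}$ is calibrated exactly so that $2NM-c_N M^{(N+3)/2}I^{-(N-1)/2}<0$ at $t=0$. I would then note that the corresponding sublevel set of $I$ is forward invariant (as $I$ decreases the negative term only grows in magnitude), so the differential inequality $\frac{\D}{\D t}I\le-\tfrac12 c_N M^{(N+3)/2}I^{-(N-1)/2}$ holds for all later times; integrating it shows that $I^{(N+1)/2}$ decreases at a fixed positive rate and hence reaches $0$ in finite time. This contradicts $I(t)\ge 0$ and establishes that the maximal time of existence is finite.
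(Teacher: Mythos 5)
Your treatment of the transversal case \eqref{eq:u1} is correct and coincides with the paper's: the same virial identity, the same slice-wise use of $\partial_z n\le 0$ to get $\big(\int_{z>0} n(t,y,z)\D z\big)^2\le 2\,n(t,y,0)\int_{z>0} z\,n(t,y,z)\D z$, and the same mass-concentration interpolation $\int\rho^2\gtrsim M^{(N+3)/2}\bI^{-(N-1)/2}$. The closing ODE argument is also fine.

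The potential case \eqref{eq:u2}, however, contains a genuine gap, and your proposed route would not work as described. You split $x\cdot\bu=z\,\partial_z c+y\cdot\nabla_y c$ and claim the normal part ``is handled as above.'' It is not: in the bulk, $\partial_z c(t,y,z)=-\int_{y'}\frac{z}{(|y-y'|^2+z^2)^{N/2}}\,n(t,y',0)\D y'$ is a nonlocal smearing of the trace, and it coincides with $-n(t,y,0)$ only at $z=0$; so the quantity $\int_\mH z\,\partial_z c\, n\,\D x$ is not the quantity $\int_\mH z\, n(t,y,0)\,n(t,y,z)\D x$ you already know how to bound. The real difficulty, which your sketch defers entirely to ``decoding the matrix condition,'' is that the interaction term couples the \emph{trace} density $n(\cdot,0)$ with the \emph{bulk} density $n(\cdot,z)$ through the kernel $\frac{y\cdot(y-y')+z^2}{(|y-y'|^2+z^2)^{N/2}}$, and no useful sign or lower bound comes out of symmetrisation alone. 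The paper's key step is a pointwise lemma: under $\partial_z f\le 0$ and the hypothesis $A^T+A\ge 0$ with $A=x\otimes\partial_z\nabla_y\log f$, one has
\[
(y-y')\cdot\bigl(f(y',0)f(y,z)\,y-f(y,0)f(y',z)\,y'\bigr)\;\ge\;|y-y'|^2\,f(y,z)f(y',z)\, ,
\]
proved by reducing to the monotonicity in $y$ of the vector field $\frac{f(y,z)/f(y,0)}{1-f(y,z)/f(y,0)}\,y$. This converts the mixed trace--bulk product $n(t,y',0)n(t,y,z)$ into the bulk--bulk product $n(t,y',z)n(t,y,z)$ at equal heights, after which the kernel is bounded below by $(|y-y'|^2+z^2)^{1-N/2}$ and a localisation on $\{|y|,|y'|<R/3,\ z<2R/3\}$ plus Cauchy--Schwarz yields $\frac{\D\bI}{\D t}\le NM-\tfrac12 R^{1-N}M^2+CR^{-N-3}\bI^2$, from which the conclusion follows by optimising in $R$. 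Without this (or an equivalent) trace-to-bulk comparison, your differential inequality for $\bI$ in the potential case is not established, so the proof is incomplete precisely at the point you yourself identify as delicate.
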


\paragraph{Open questions} We end this introductory Section with some open questions that we are not able to resolve.
(i) 
Obtain a rate for the convergence in relative entropy in Theorem \ref{th:1D} for the cases $M = 1$ and $M<1$.
(ii) Prove blow-up for the systems (\ref{eq:2D model})--(\ref{eq:u2}) with large initial data  without any monotonicity assumption on the density $n(t,x)$.

The outline of the paper is as follows. In Section~\ref{secdim1}, we analyse with full details the one-dimensional case. In section~\ref{secvariants} we study some variants of blow-up criteria in the one-dimensional case. In Section~\ref{sec:ODE/PDE}, we study a model with flux of markers at the boundary in the one-dimensional case. In Section~\ref{secdimsup}, we analyse the higher dimensional case.

Results in the one-dimensional case have been announced in the note \cite{CalvezMeunier}.

\section{The boundary Keller-Segel (BKS) equation in dimension $N = 1$}\label{secdim1}

In this Section we study the following equation,
\begin{equation}\label{eq:1D}
\left\{\begin{array}{l}
\partial _t n(t,z) = \partial _{zz} n(t,z) +n(t,0) \partial _z n(t,z)\, , \quad t >0\, , \, z\in (0,+\infty)\, ,\medskip\\
\partial _z n (t,0)+n(t,0)^2=0\, , 
\end{array}\right.
\end{equation}
and we prove Theorems \ref{th:1D} and \ref{th:1D BU}. More precisely, in Sections \ref{sec:M<1} we prove the existence of a global weak solution for $M\le 1$. Then in Section \ref{sec:BU} we prove the blow up character in the case $M>1$. 

We begin with a proper definition of weak solutions, adapted to our context.
\begin{definition}\label{def:weak}
We say that $n(t,z)$ is a weak solution of (\ref{eq:1D})  on $(0,T)$ if it satsifies:
\begin{equation}
n\in L^\infty(0,T;L^1_+(\R_+))\, , \quad \partial_z n \in L^1((0,T)\times \R_+)  \, , \label{eq:flux L1}
\end{equation}
and $n(t,z)$ is a solution of (\ref{eq:1D}) in the sense of distributions in $\mathcal D'(\R_+)$
\end{definition}

Since the flux $(\partial_z n(t,z) + n(t,0) n(t,z))$ belongs to $ L^1((0,T)\times \R_+)$, the solution is well-defined in the distributional sense under assumption (\ref{eq:flux L1}). In fact we can write
$\int_0^T n(t,0) \D t   = - \int_0^T\int_{z>0} \partial_z n(t,z)\D z\D t$.

Weak solutions in the sense of Definition \ref{def:weak} are mass-preserving: 
\[ M =\int_{z>0} n_0(z)\D  z = \int_{z>0} n(t,z)\D  z\, .\] 
The proof closely follows the arguments of the next Lemma which is concerned with moment growth.\\

\begin{lemma}[Moment growth]\label{Moment growth}
Assume $n(t,z)$ is a weak solution of (\ref{eq:1D}). Assume in addition that $z n_0\in L^1(\R_+)$. Then the following identity holds true: 
\begin{equation}\label{faible11}
 \int _{z>0} z n(T,z) \D z = \int _{z>0} z n_0(z) \D z + \int _0^T \left(1-\int_{z>0} n(t,z) \D z\right)n(t,0) \D t\, .
\end{equation}
\end{lemma}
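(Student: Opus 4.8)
The plan is to rewrite \eqref{eq:1D} in conservative form and to test it against a truncated version of the linear weight $z$. First I would observe that \eqref{eq:1D} reads $\partial_t n = \partial_z\big(\partial_z n + n(t,0)\, n\big)$, so that the flux $G(t,z) := \partial_z n(t,z) + n(t,0)\, n(t,z)$ vanishes at $z=0$ precisely by the boundary condition, and moreover $G \in L^1((0,T)\times\R_+)$. Indeed $\partial_z n \in L^1$ by \eqref{eq:flux L1}, while the relation $n(t,0) = -\int_{z>0}\partial_z n(t,z)\,\D z$ recalled above gives $\int_0^T |n(t,0)|\,\D t \le \|\partial_z n\|_{L^1}$, whence $\int_0^T\!\int_{z>0}|n(t,0)\, n|\,\D z\,\D t \le \|n\|_{L^\infty(L^1)}\int_0^T|n(t,0)|\,\D t < +\infty$. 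That same relation, which itself follows from $\lim_{z\to\infty}n(t,z)=0$, yields $-\int_{z>0} G\,\D z = n(t,0)\big(1 - \int_{z>0} n\,\D z\big)$, which is exactly the integrand appearing on the right-hand side of \eqref{faible11}.

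Next I would introduce a cutoff $\chi_R(z) = \chi(z/R)$, with $\chi$ smooth, nonincreasing, $\chi\equiv 1$ on $[0,1]$ and $\chi\equiv 0$ on $[2,\infty)$, and test the weak formulation with the time-independent function $\varphi_R(z) = z\,\chi_R(z)$. Since $\varphi_R$ is smooth and compactly supported, and since $\varphi_R(0)=0$ (so the boundary term at $z=0$ vanishes automatically, and would in any case be killed by the zero-flux condition), the weak formulation produces the clean identity
\[\int_{z>0}\varphi_R\, n(T,z)\,\D z - \int_{z>0}\varphi_R\, n_0(z)\,\D z = -\int_0^T\!\int_{z>0} G(t,z)\,\varphi_R'(z)\,\D z\,\D t\,.\]
I would then split $\varphi_R' = \chi_R + z\chi_R'$ and pass to the limit $R\to+\infty$ termwise.

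The right-hand side is controlled by $G\in L^1$: dominated convergence gives $\int_0^T\!\int G\chi_R \to \int_0^T\!\int G$, while the truncation error obeys $\big|\int_0^T\!\int G\, z\chi_R'\big| \le C\int_0^T\!\int_{R\le z\le 2R}|G| \to 0$, because $|z\chi_R'(z)|\le C$ on the annulus $\{R\le z\le 2R\}$ and this is a tail integral of an $L^1$ function. Hence the right-hand side tends to $\int_0^T n(t,0)\big(1-\int_{z>0} n\,\D z\big)\,\D t$. On the left-hand side I would use that $\chi_R(z)=\chi(z/R)$ increases monotonically to $1$ as $R\to+\infty$, so that $\varphi_R\uparrow z$ pointwise; since $n\ge 0$, the monotone convergence theorem gives $\int\varphi_R\, n_0 \to \int z\, n_0<+\infty$ (finite by assumption) and $\int\varphi_R\, n(T) \to \int z\, n(T)$, the latter limit being finite because the three other terms in the identity are. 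This simultaneously establishes finiteness of the first moment of $n(T,\cdot)$ and the identity \eqref{faible11}.

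The step I expect to be the main obstacle is exactly the control at infinity: the moment $\int z\, n(T,z)\,\D z$ is not known to be finite a priori, so one cannot take $\varphi = z$ directly as a test function. The truncation $\varphi_R$ circumvents this, but it forces one to show that the error term $\int_0^T\!\int G\, z\chi_R'\,\D z\,\D t$ — which is supported precisely where the weight $z$ is largest — does not survive in the limit. This is where the integrability $G\in L^1((0,T)\times\R_+)$, and through it the bound $\int_0^T|n(t,0)|\,\D t<+\infty$, is essential, together with the nonnegativity of $n$ used in the monotone passage on the left-hand side.
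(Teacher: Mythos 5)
Your proof is correct and follows essentially the same route as the paper's: both test the weak formulation against the truncated weight $z\chi(\eps z)$ (your $R=1/\eps$), use monotone convergence on the left-hand side to obtain finiteness of the first moment, and pass to the limit in the flux term using $\partial_z n\in L^1$ and $n(\cdot,0)n\in L^1$; your separate treatment of the tail term $\int G\,z\chi_R'$ is just an explicit version of the paper's uniform $L^\infty$ bound on $\varphi_\eps'$ combined with dominated convergence. No gaps.
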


\begin{proof}
Consider the  approximation function $\chi(z)$ which verifies $\chi(z)=1$ if $0\le z\le 1$, $\chi(z)=0$ if $z\ge 2$, which is smooth and non-negative everywhere.  
Define the family of functions $(\varphi_\varepsilon )_\varepsilon$ by $\varphi_\varepsilon(z) = z \chi(\eps z )$. 
We recall the weak formulation:
\begin{eqnarray*}
\int _{z>0} n(T,z)\varphi_\varepsilon(z)\D  z &=& \int _{z>0} n_0(z)\varphi_\varepsilon(z)\D z \\
& & - \int_0^T \int _{z>0} \left( \partial _{z} n(t,z) +n(t,0)  n(t,z)\right) \varphi_\varepsilon'(z)\D  z \D t\, . 
\end{eqnarray*}
The function $\varphi_\eps(z)$ converges monotically to $z$ as $\eps\to 0$, hence from the 
monotone convergence theorem, we deduce that $z n(T,z)\in L^1$. 

The function $\varphi_\eps'(z) = \chi(\eps z) + \eps z \chi'(\eps z)$ is bounded in $L^\infty$ uniformly in $\eps$ and it converges to 1 $a.e$. Since $n(\cdot, 0) n  \in L^1((0,T)\times R _+)$ and $\partial_z n \in L^1((0,T)\times \R_+)$, from Lebesgue's dominated convergence theorem, it follows that 
\begin{eqnarray*}
\lim _{\varepsilon \to 0}  \int_0 ^T \int _{z>0} \varphi'_{\varepsilon} (z) n(t,0)n(t,z)\D  z \D t & =& \int _0^T \int_{z>0} n(t,0) n(t,z) \D  z\D t \,, \\
\lim _{\varepsilon \to 0} \int_0 ^T \int _{z>0}  \varphi '_\varepsilon  \left(z\right) \partial _z n(t,z) \D  z \D t  & =& \int_0 ^T \int _{z>0}    \partial _z n(t,z) \D  z\D t  = - \int_0 ^T  n(t,0) \D t\, . 
\end{eqnarray*}
\end{proof}

\subsection{Global existence for sub-critical mass $M< 1$}\label{sec:M<1}

\subsubsection{A priori estimates}

Our next result is concerned with the derivation of a priori bounds for solutions to (\ref{eq:1D}) in the classical sense.\\

\begin{proposition}[Main a priori estimate]\label{apriori}
Let $n$ be a classical solution to (\ref{eq:1D}). If  $M<1$, then the following estimate holds true for some $\delta>0$ and for all $t\in (0,T)$:
\begin{eqnarray}
  \int _{z>0}n(t,z) (\log n (t,z))_+   \D z +  \delta \int _{0}^t\int _{z>0} n(s,z)(\partial_z \log n (s,z))^2   \D z\D  s \nonumber \\
 \leq   \int_{z>0} n_0(z)\left(\log n_0(z)\right) _+ \D  z +  \int_{z>0} z n_0(z) \D z +  C(T)\, . \label{eq:main estimate} 
\end{eqnarray}
\end{proposition}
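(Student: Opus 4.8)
The plan is to run a free-energy computation on the Boltzmann entropy $\int_{z>0} n\log n\,\D z$, to convert the boundary term it produces into dissipation by a sharp one-dimensional trace inequality, and finally to pass from the signed entropy to its positive part at the cost of a first moment which is itself controlled by the dissipation. Since the proposition concerns classical solutions, all integrations by parts below are licit and I may use decay as $z\to+\infty$ freely. First I would rewrite \eqref{eq:1D} in divergence form $\partial_t n = \partial_z\big(\partial_z n + n(t,0)\,n\big)$, so that the boundary condition reads exactly ``zero flux at $z=0$''. Multiplying by $\log n + 1$ and integrating by parts, the boundary contribution at $z=0$ is proportional to $\partial_z n(t,0)+n(t,0)^2$ and hence vanishes; using $\int_{z>0}\partial_z n\,\D z = -n(t,0)$ I obtain the entropy dissipation identity
\[ \frac{\D}{\D t}\int_{z>0} n\log n\,\D z + \int_{z>0} n\,(\partial_z\log n)^2\,\D z = n(t,0)^2 ,\]
where I used $n(\partial_z\log n)^2 = (\partial_z n)^2/n$, so the dissipation of the statement appears verbatim.

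The conceptual heart is to absorb the boundary term $n(t,0)^2$ into the dissipation. Setting $u=\sqrt n$, I would write $n(t,0) = u(t,0)^2 = -2\int_{z>0} u\,\partial_z u\,\D z$ and apply Cauchy--Schwarz with $\int u^2 = M$ and $\int(\partial_z u)^2 = \tfrac14\int (\partial_z n)^2/n$, yielding
\[ n(t,0)^2 \le M\int_{z>0} n\,(\partial_z\log n)^2\,\D z .\]
This is the trace inequality in which the critical mass $M=1$ is born. Inserting it into the identity above gives
\[ \frac{\D}{\D t}\int_{z>0} n\log n\,\D z + (1-M)\int_{z>0} n\,(\partial_z\log n)^2\,\D z \le 0 ,\]
whose time integration controls the dissipation with the positive constant $1-M$.

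It remains to recover $\int n(\log n)_+$ and the stated right-hand side. Writing $\int n(\log n)_+ = \int n\log n + \int n(\log n)_-$, I would estimate the negative part by splitting $\{n<1\}$ according to the sign of $n-e^{-z}$: where $n\ge e^{-z}$ one has $-\log n\le z$, hence $n(\log n)_-\le z\,n$; where $n<e^{-z}$ the elementary bound $\sqrt s\,(-\log s)\le 2/e$ gives $n(\log n)_- \le (2/e)\,e^{-z/2}$, which is integrable. Thus $\int n(\log n)_- \le \int z\,n\,\D z + 4/e$. The first moment is then supplied by Lemma~\ref{Moment growth}, which for $M<1$ reads $\int z\,n(t)\,\D z = \int z\,n_0\,\D z + (1-M)\int_0^t n(s,0)\,\D s$, and the trace bound $n(s,0)\le M^{1/2}\big(\int n(\partial_z\log n)^2\big)^{1/2}$ together with Cauchy--Schwarz in time bounds $\int_0^t n(s,0)\,\D s$ by $M^{1/2}T^{1/2}\big(\int_0^t\!\!\int n(\partial_z\log n)^2\big)^{1/2}$.

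The main obstacle is precisely this last step: the moment grows with the dissipation, which is exactly the quantity I am trying to bound, so the estimate is a priori circular. I expect to close the loop by Young's inequality, writing the cross term as $\le \tfrac{1-M}{2}\int_0^t\!\!\int n(\partial_z\log n)^2\,\D z\,\D s + \tfrac{(1-M)M}{2}T$, so that half of the accumulated dissipation is absorbed into the left-hand side. Choosing $\delta = (1-M)/2>0$, using $\int n_0\log n_0\le \int n_0(\log n_0)_+$, and collecting the remaining constants into $C(T)=4/e+\tfrac12(1-M)MT$ then yields exactly \eqref{eq:main estimate}.
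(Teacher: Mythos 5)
Your proposal is correct and follows essentially the same route as the paper: the entropy dissipation identity, the Cauchy--Schwarz trace inequality $n(t,0)^2\le M\int n(\partial_z\log n)^2$ giving the factor $(1-M)$, control of the first moment via Lemma~\ref{Moment growth}, and absorption of the resulting dissipation term by Young's inequality with a small parameter. The only (harmless) deviation is that you control the negative part of the entropy by an elementary pointwise splitting of $\{n<1\}$ along $n\gtrless e^{-z}$ instead of the paper's Jensen-based Lemma~\ref{BDP}; both yield a bound of the form $\int n(\log n)_-\le \int z\,n + C$.
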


\begin{proof}
We first derive the following trace-type inequality.
\begin{eqnarray}
 n(t,0)^2 &=& \left( \int_{z>0} \partial_z n(t,z) \D z \right)^2 \nonumber \\
\label{ineq:trace:0}
 & \le &\left(\int_{z>0}n(t,z) \D z \right) \left(\int_{z>0}n(t,z) \left( \partial_z \log n(t,z) \right)^2  \D z \right) \, .
\end{eqnarray}
We compute the evolution of the entropy   
\begin{eqnarray}
  \frac{ \D }{ \D t} \int_{z>0} n(t,z)\log n(t,z) \D  z  & = &\int _{z>0} \partial _t n(t,z) \log n(t,z)  \D  z \nonumber \\
 & = &- \int _{z>0} \left(\partial_z n (t,z) + n(t,0) n(t,z)\right) \frac{\partial_z n(t,z) }{n(t,z)}  \D  z \nonumber 
 \\& 
 =& -  \int _{z>0} n(t,z)\left( \partial_z \log n(t,z) \right)^2 \D  z   +    n(t,0) ^2 \, . 
\label{eq:entropy dissipation}
\end{eqnarray}
The two contributions are competing. We estimate the balance using inequality (\ref{ineq:trace:0}). 
\begin{equation*} 
  \frac{ \D }{ \D t} \int_{z>0} n(t,z)\log n(t,z) \D  z    \leq  (M-1) \int _{z>0} n(t,z)\left( \partial_z \log n(t,z) \right)^2 \D  z \, .
\end{equation*}
On the contrary to the classical two-dimensional Keller-Segel equation, the dissipation of entropy gives directly the sharp criterion on the mass. There is no need to seek a free energy as in \cite{BDP} (and references therein). 
To control the negative part of the entropy, we use the following Lemma adapted from \cite{BDP, Calvez.Corrias.Ebde}.\\

\begin{lemma}\label{BDP}
For any $f\in L^1_+(\R _+, (1+z)\D z)$, if $\int  f\log f <+\infty $, then $ f\log f $ is in $ L^1(\R _+)$ and for all $\alpha >0$, the following inequality holds true:
\begin{equation} \label{eq:carleman} 
\int_{z>0}  f(z)(\log f(z))_+ \D z \leq \int_{z>0}  f(z) \left( \log f(z)+  \alpha z \right) \D z+\frac{1}{\alpha e}  
\, .
\end{equation}
\end{lemma}
\begin{proof}
Let $\overline {f} =f \mathds{1}_{f\le 1}$ and $m =\int_{z>0} \overline{f}(z)\D z$. We build up the relative entropy between $\overline f$ and $\alpha e^{-\alpha z}$.
\[ \int_{z>0} \overline {f}(z)\left(\log \overline {f}(z) +\alpha z\right) \D z =  \int_{z>0}  \frac{\overline {f}(z)}{\alpha e^{-\alpha z}} \log \left(\frac{\overline {f}(z)}{\alpha e^{-\alpha z}}\right)\alpha e^{-\alpha z} \D z+m \log \alpha \, .\] 
Using Jensen's inequality, we deduce that 
\begin{eqnarray*}
& &\int_{z>0}  \frac{\overline {f}(z)}{\alpha e^{-\alpha z}} \log \left(\frac{\overline {f}(z)}{\alpha e^{-\alpha z}}\right)\alpha e^{-\alpha z} \D z\\
& &\qquad \qquad  \qquad \ge  \left(\int_{z>0}  \frac{\overline {f}(z)}{\alpha e^{-\alpha z}}\alpha e^{-\alpha z} \D z \right)\log \left(\int_{z>0} \frac{\overline {f}(z)}{\alpha e^{-\alpha z}}\alpha e^{-\alpha z} \D z\right)\\
& & \qquad \qquad  \qquad  = m \log m \, .
\end{eqnarray*} 
Therefore, 
\[ \int_{z>0} \overline {f}(z)\log \overline {f}(z)\D z +\alpha \int_{z>0}  z \overline {f}(z) \D z   \ge   m \log \left( \alpha m \right)   \ge   -\frac{1}{\alpha e }  \, .\]
Using 
$$ \int_{z>0}f(z)(\log f(z))_+ \D z =  \int_{z>0}  f(z) \log f(z) \D z-  \int_{z>0}  \overline {f}(z) \log \overline {f}(z) \D z \, ,$$
this completes the proof of Lemma \ref{BDP}.
\end{proof}

Let us now estimate the first moment. Recalling (\ref{faible11}), we deduce that
\begin{eqnarray}
\int_{z>0} z n(t,z) \D z &\le &\int_{z>0} z n_0(z) \D z+ \int _0^t n(s,0) \D s \nonumber \\
&\le & 
\int_{z>0} z n_0(z) \D z+\frac{T}{4\delta'} + \delta' \int _0^t n(s,0)^2 \D s \, , \nonumber \\
&\le & \int_{z>0} z n_0(z) \D z+\frac{T}{4\delta'} \nonumber \\
& &\qquad   \qquad+ \delta'  \int _0^t \int_{z>0}n(s,z) \left( \partial_z \log n(s,z) \right)^2 \D  z \D s \, . 
\label{ineq:hyper}
\end{eqnarray}
Combining (\ref{eq:entropy dissipation}), (\ref{eq:carleman}) and (\ref{ineq:hyper}) with $\alpha = 1$ we obtain that
\begin{eqnarray*}
 \int_{z>0} n(t,z)\left(\log n(t,z)\right) _+ \D  z  + (1- M -   \delta')\int_0^t\int_{z>0}n(s,z) \left( \partial_z \log n(s,z) \right)^2 \D  z \D s \\ 
 \leq  \int_{z>0} n_0(z)\log n_0(z) \D  z +   \int_{z>0} z n_0(z) \D z + \frac{1}{  e} + \frac{  T}{4\delta'} \, .
\end{eqnarray*}
Since $M<1$ we can choose $\delta'>0$ such that (\ref{eq:main estimate}) holds. 
\end{proof}

\subsubsection{Regularization procedure}

To prove existence of weak solutions in the sense of Definition \ref{def:weak} we perform a classical regularization procedure. We carefully choose our function spaces in order to end up with minimal assumptions on the initial data.
We introduce
\[ a^\eps(t) = \int  _{z>0} \phi_\eps(z) n^\eps(t,z) \D  z\,, \]
where  $\phi_\eps$ is an approximation to the identity. We have formally $a^\eps(t) \to n (t,0)$ as $\eps\to 0$. 

We consider the following regularized problem
\begin{equation}\label{eq:1D:reg}
\left\{\begin{array}{l}\partial _t n^\eps(t,z) = \partial _{zz} n^\eps(t,z) +a^\eps(t) \partial _z n^\eps(t,z)\, , \medskip\\
\partial _{z} n^\eps(t,0) +a^\eps(t)  n^\eps(t,0)=0\,.
\end{array}\right.
\end{equation}
Our aim is to extend the main {\em a priori} estimate (\ref{eq:main estimate}) to the regularized problem (\ref{eq:1D:reg}). We  check that 
\[ a^\eps(t) = - \int _{z>0} \phi_\eps(z) \int _{y=z}^{+\infty}\partial_z n^\eps(t,y)\D  y \D  z  \leq  \int _{z>0} |\partial_z n^\eps(t,z)|\D  z \, .\]
Thus the following inequality replaces (\ref{ineq:trace:0}):
\[a^\eps(t) n^\eps(t,0)   \le M \left(\int_{z>0}n^\eps(t,z) \left( \partial_z \log n^\eps(t,z) \right)^2  \D z \right) \, .\]
On the other hand the moment growth estimate only relies on the diffusion contribution. We have accordingly,
\begin{eqnarray} \label{eq:moment eps}
\int_{z>0} z n^\eps(t,z) \D z &\leq &  \int_{z>0} z n_0(z) \D z+\frac{T}{4\delta '} \\
& & + \delta ' \int _0^t \int_{z>0}n^\eps(s,z) \left( \partial_z \log n^\eps(s,z) \right)^2 \D  z \D s \, .\nonumber 
\end{eqnarray}
It is then straightforward to justify (\ref{eq:main estimate}) for the regularized solution $n^\eps$ in the line of Proposition \ref{apriori}. There exists $\delta >0$ such that
\begin{eqnarray}
  \int _{z>0}n^\eps(t,z) (\log n^\eps (t,z))_+ \,  \D z +  \delta \int _{0}^t\int _{z>0} n^\eps(s,z)(\partial_z \log n^\eps (s,z))^2  \,  \D s\D  z  \nonumber \\
 \leq   \int_{z>0} n_0(z)\left(\log n_0(z)\right) _+ \D  z + \int_{z>0} z n_0(z) \D z +  C(T)\, . \label{eq:main estimate eps} 
\end{eqnarray}

\subsubsection{Time compactness}
\label{sec:aubin}

Passing to the limit as $\eps\to 0$, the main difficulty lies in the nonlinear term $a^\varepsilon(t)\partial _z n^\varepsilon(t,z)$. We need some compactness to proceed further. It is provided by the Aubin-Simon Lemma, see \cite{Aubin1963,Lions1969,Simon}.\\

\begin{lemma}[Aubin-Simon]\label{AubinLions}
Let $X\subset B \subset Y$ be Banach spaces such that the embedding $X\subset B$ is compact. Assume that the set of functions $\mathcal F$ satisfies: $\mathcal F$ is bounded in $L^2(0,T;X)$ and $\partial_t f$ is uniformly bounded in $L^2(0,T;Y)$. Then $\mathcal F$ is relatively compact in  $L^2(0,T;B)$.\\
\end{lemma}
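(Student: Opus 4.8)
This is a classical functional-analytic compactness statement (the Aubin--Lions--Simon lemma), so the plan is to reduce it to two standard ingredients: an interpolation inequality of Ehrling type and the Fr\'echet--Kolmogorov--Riesz criterion for relative compactness in a vector-valued $L^2$ space. First I would establish that, since $X\subset B$ is compact and $B\subset Y$ is continuous, for every $\eta>0$ there exists $C_\eta>0$ such that
\[ \|v\|_B \leq \eta\,\|v\|_X + C_\eta\,\|v\|_Y \qquad \text{for all } v\in X. \]
This follows by contradiction: if it failed for some $\eta_0$, one could produce a sequence $(v_n)$ with $\|v_n\|_B=1$, $(v_n)$ bounded in $X$, and $\|v_n\|_Y\to 0$; the compact embedding would extract a subsequence converging in $B$ to a limit of unit $B$-norm, while continuity of $B\subset Y$ forces that limit to vanish in $Y$, a contradiction.

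Next I would use the characterization of relatively compact subsets of $L^2(0,T;B)$: a family $\mathcal F$ bounded in $L^2(0,T;B)$ is relatively compact if and only if
\[ \left\{ \int_{t_1}^{t_2} f(t)\,\D t \;:\; f\in\mathcal F \right\} \text{ is relatively compact in } B \text{ for all } 0<t_1<t_2<T, \]
and the time translates are uniformly small, $\sup_{f\in\mathcal F}\|f(\cdot+h)-f\|_{L^2(0,T-h;B)}\to 0$ as $h\to 0$. The first condition is immediate here: H\"older's inequality and the bound in $L^2(0,T;X)$ give $\int_{t_1}^{t_2}f(t)\,\D t$ bounded in $X$ uniformly in $f$, and the compact embedding $X\subset B$ makes this set relatively compact in $B$.

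The remaining work is the time-equicontinuity, which is where the hypothesis on $\partial_t f$ enters. Interpreting $\partial_t f\in L^2(0,T;Y)$ as the weak derivative, one has $f(t+h)-f(t)=\int_t^{t+h}\partial_t f(s)\,\D s$ in $Y$, so H\"older in $s$ followed by Fubini yields $\|f(\cdot+h)-f\|_{L^2(0,T-h;Y)}\leq h\,\|\partial_t f\|_{L^2(0,T;Y)}$, which tends to $0$ uniformly over $\mathcal F$. Applying the interpolation inequality to $v=f(t+h)-f(t)$ and integrating in $t$ gives
\[ \|f(\cdot+h)-f\|_{L^2(0,T-h;B)} \leq \eta\,\|f(\cdot+h)-f\|_{L^2(0,T-h;X)} + C_\eta\,\|f(\cdot+h)-f\|_{L^2(0,T-h;Y)}, \]
where the first norm on the right is bounded by $2\|f\|_{L^2(0,T;X)}$. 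Letting $h\to 0$ and then $\eta\to 0$ shows the translates are uniformly small, completing the verification.

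I expect the main obstacle to be establishing the abstract Fr\'echet--Kolmogorov criterion in the Bochner setting itself, in particular controlling the behaviour of translates near the endpoints $t=0$ and $t=T$ and passing from control of averages and translates to genuine relative compactness. If that criterion is granted as known (as in \cite{Simon}), then the genuine content reduces to the interpolation step, which cleanly transfers the spatial compactness carried by $X$ and the temporal regularity carried by $Y$ into compactness in $L^2(0,T;B)$.
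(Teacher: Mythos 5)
Your argument is correct, but note that the paper itself does not prove this lemma at all: it is stated as a known classical result with references to Aubin, Lions and Simon, and the statement given is in fact a special case of Simon's general compactness theorem. What you have written is essentially a reconstruction of Simon's own proof: the Ehrling-type interpolation inequality $\|v\|_B \leq \eta\|v\|_X + C_\eta\|v\|_Y$ (your contradiction argument for it is sound, using injectivity of the embedding $B\subset Y$ to conclude that the $B$-limit vanishes), combined with the translation criterion for relative compactness in $L^2(0,T;B)$. The two verifications are done correctly: H\"older plus the compact embedding $X\subset B$ handles the averages $\int_{t_1}^{t_2} f\,\D t$, and the estimate $\|f(\cdot+h)-f\|_{L^2(0,T-h;Y)}\leq h\|\partial_t f\|_{L^2(0,T;Y)}$ (via $f(t+h)-f(t)=\int_t^{t+h}\partial_t f(s)\,\D s$, Cauchy--Schwarz and Fubini) combined with the interpolation inequality handles equicontinuity of translates. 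The only ingredient you take for granted is the abstract Fr\'echet--Kolmogorov criterion in the Bochner setting, which you explicitly flag and attribute to \cite{Simon}; since the paper delegates the entire lemma to that same reference, this is an acceptable level of rigour, and your write-up actually supplies more detail than the paper does. If you wanted a fully self-contained proof you would need to prove Simon's Theorem 1 as well, but for the purposes of this paper the citation suffices.
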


The natural choice for spaces in our context would be $\widetilde X =  W^{1,1}(\R _+) $ and $B =  \mathcal{C}^{0}(\R _+)$ (up to the decay problem at infinity). However, due to the possible apparition of jumps, the embedding $\tilde X \subset B$ is not compact. Using the entropy estimate (\ref{eq:main estimate eps}) we are able to modify the space $\widetilde X$ in order to make the embedding $X\subset B$ compact. The crucial point is to obtain an equi-continuity condition weaker than any H\"older condition, in the spirit of \cite[Theorem 8.36]{Adams}. 

\begin{lemma} \label{lem:I(N)}
Assume $\mathcal F$ is a set of non-negative bounded functions in the following sense: there exists a constant $A >0$ such that for all $f\in \mathcal F$ 
\[   \sup_{t\in (0,T)} \int_{z>0} f(t,z) (\log f(t,z))_+\D z \leq A \, , \quad \int_0^T \int_{z>0}f(t,z) \left( \partial_z \log f(t,z) \right)^2 \D z \D t \leq A\, . \] Then there exists a continuous function $\eta:\R_+\to \R_+$ and a constant $A'$ depending on $A$ such that $\eta(0) = 0$ and for all function $f\in \mathcal F$ we have
\begin{equation} 
\int_0^T \left( \sup_{x\neq y} \frac{|f (t,y) - f(t,x)|}{\eta(y - x)}\right)^2\D t \leq A'\, . \label{eq:equicont}
\end{equation}
\end{lemma}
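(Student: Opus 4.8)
The plan is to control the spatial increment $f(t,y)-f(t,x)$ pointwise in time by the product of a quantity that is integrable in $t$ (coming from the entropy-dissipation bound) and a purely geometric modulus depending only on $|y-x|$ (coming from the entropy bound, via equi-integrability). Writing $\partial_z f = f\,\partial_z\log f$ and using the fundamental theorem of calculus, for $x<y$ I would write
\[
f(t,y) - f(t,x) = \int_x^y f(t,z)\,\partial_z\log f(t,z)\,\D z\, ,
\]
and then apply the Cauchy--Schwarz inequality on the interval $[x,y]$ to get
\[
|f(t,y)-f(t,x)| \le \left(\int_{z>0} f(t,z)\bigl(\partial_z\log f(t,z)\bigr)^2\D z\right)^{1/2}\left(\int_x^y f(t,z)\,\D z\right)^{1/2} =: D(t)^{1/2}\left(\int_x^y f(t,z)\,\D z\right)^{1/2}\, .
\]
Here $D(t)$ denotes the entropy dissipation, which by hypothesis satisfies $\int_0^T D(t)\,\D t \le A$, so the first factor is harmless once squared and integrated in time. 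Everything then hinges on bounding the local mass $\int_x^y f$ by a function of $y-x$ alone, uniformly over $\mathcal F$.

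The crux is an equi-integrability estimate of de la Vall\'ee Poussin / Dunford--Pettis type, using only the uniform bound $\sup_t \int f(\log f)_+ \le A$. For any threshold $K>1$ I split the integral according to $\{f\le K\}$ and $\{f>K\}$, bounding on the latter set $f \le f(\log f)_+/\log K$:
\[
\int_x^y f(t,z)\,\D z \le K(y-x) + \int_{\{f>K\}} f(t,z)\,\D z \le K(y-x) + \frac{1}{\log K}\int_{z>0} f(t,z)(\log f(t,z))_+\D z \le K(y-x) + \frac{A}{\log K}\, .
\]
Optimising over $K$ (for instance $K=(y-x)^{-1/2}$ for small arguments) I would set
\[
\omega(h) := \inf_{K>1}\left(K h + \frac{A}{\log K}\right)\, ,
\]
which, as an infimum of affine functions of $h$, is non-decreasing, concave, continuous, and satisfies $\omega(0)=0$ with $\omega(h)\to 0$ as $h\to 0$. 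I then take $\eta(h) := \sqrt{\omega(h)}$: it is continuous with $\eta(0)=0$, and since $\omega(h)\sim 2A/|\log h|$ as $h\to 0$ the modulus $\eta$ decays more slowly than any power $h^\alpha$, i.e.\ it is weaker than every H\"older condition, exactly as announced before the statement.

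Combining the two steps gives, for every pair $x\neq y$ and every $f\in\mathcal F$, the bound $|f(t,y)-f(t,x)| \le D(t)^{1/2}\,\eta(y-x)$, so that
\[
\sup_{x\neq y}\frac{|f(t,y)-f(t,x)|}{\eta(y-x)} \le D(t)^{1/2}
\]
uniformly over the pairs $(x,y)$. Squaring and integrating in time yields \eqref{eq:equicont} with $A'=A$. The two points I expect to require care are: (i) justifying the fundamental-theorem-of-calculus representation, i.e.\ the absolute continuity of $z\mapsto f(t,z)$, which is immediate for the smooth approximants $n^\eps$ to which the lemma is applied; and (ii) the equi-integrability estimate, which is the genuine obstacle, since it is precisely the place where the logarithmic entropy bound must be converted into a quantitative, H\"older-free modulus of continuity $\eta$ — this is also the reason why the naive choice $\widetilde X = W^{1,1}(\R_+)$ fails to give a compact embedding and must be replaced by the space carrying the modulus $\eta$.
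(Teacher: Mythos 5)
Your proof is correct, and it follows the same overall strategy as the paper: write $f(t,y)-f(t,x)=\int_x^y f\,\partial_z\log f$, apply Cauchy--Schwarz to split off the dissipation $D(t)$ (which is integrable in time), and then convert the uniform entropy bound into a bound on the local mass $\int_x^y f$ by a modulus depending only on $y-x$ and $A$. Where you diverge is in that last conversion: the paper applies Jensen's inequality to the convex function $u\mapsto u(\log u)_+$ to bound the \emph{average} $\frac{1}{y-x}\int_x^y f$ by $\Phi\bigl(A/(y-x)\bigr)$, where $\Phi$ is the inverse bijection of $x(\log x)_+$ on $[1,\infty)$, and sets $\eta(z)=z\,\Phi(z^{-1}A)$; you instead use the de la Vall\'ee Poussin truncation $\int_x^y f\le K(y-x)+A/\log K$ and optimise over $K$. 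The two routes produce equivalent moduli ($\sim A/|\log h|$ as $h\to 0$, weaker than any H\"older condition, which is exactly what the compact embedding $X\subset B$ requires), so nothing is gained or lost quantitatively; your version is arguably more elementary in that it avoids introducing and inverting $\Phi$, while the paper's is a one-line consequence of convexity. One small point where your write-up is actually cleaner: since Cauchy--Schwarz bounds $|f(t,y)-f(t,x)|^2$ by (local mass)$\times D(t)$, the modulus entering the statement should be the \emph{square root} of the local-mass bound, which you take explicitly by setting $\eta=\sqrt{\omega}$ (and correspondingly get $A'=A$); the paper's definition $\eta(z)=z\Phi(z^{-1}A)$ leaves this square root implicit. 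Your closing remarks on absolute continuity of the approximants $n^\eps$ and on why $W^{1,1}$ fails are consistent with how the lemma is used in Section~\ref{sec:aubin}.
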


\begin{proof}
First for $x<y$ we have that
\begin{eqnarray} 
|f(t,y) - f(t,x)|^2 & \leq & \left(\int_x^y |\partial_z f (t,z)|\D z\right)^2 \nonumber \\
&\leq & \left( \int_x^y f(t,z)\D z\right) \left(\int_{z>0}f(t,z) \left( \partial_z \log f(t,z) \right)^2  \D z \right) \,. \label{eq:embedding}
\end{eqnarray}

We use the Jensen's inequality for $x<y$:
\begin{eqnarray*}
\left( \frac{1}{y-x}\int_x^y f(t,z)\D z \right) \log\left(\frac{1}{y-x}\int_x^y f(t,z)\D z \right)_+ 
&\leq & \frac{1}{y-x}\int_x^y f(t,z) (\log f(t,z))_+\D z \\ 
& \leq &\frac{ A}{y - x}\, . 
\end{eqnarray*}
We can invert this inequality to get:
\begin{equation} 
\frac{1}{y-x}\int_x^y f(t,z)\D z \leq \Phi\left ( \frac{ A}{y - x} \right)\, ,  \label{eq:subholder}
\end{equation}
where $\Phi:[0,+\infty) \to [1,+\infty)$ is the reciprocal bijection of $x (\log x)_+$. 
We define $\eta(z) = z \Phi(z^{-1}A)$. Clearly $\eta(z)\to 0$ as $z\to 0$ since $\Phi$ is sublinear. Combining (\ref{eq:embedding}) and (\ref{eq:subholder}), we deduce the estimate (\ref{eq:equicont}).
\end{proof}

We denote $\mathcal C^{0,\eta}$ the space of functions having modulus of continuity controlled by $\eta$:
\[ \mathcal C^{0,\eta} = \left\{ g \in \mathcal C^0\, : \,  \sup_{x\neq y} \frac{|g (y) - g(x)|}{\eta(y - x)} < +\infty  \right\}\, . \]
The injection $\mathcal C^{0,\eta} \subset \mathcal C^0$ is compact on bounded intervals \cite{Adams}. 
The behaviour of functions outside bounded intervals in our context is controlled by the following estimate which is a consequence of (\ref{eq:embedding}) as $y\to +\infty$:
\begin{equation}
  |f(t,x)|^2  \leq \frac{ 1}{x} \left( \int_{z>0} z f(t,z)\D z\right) \left(\int_{z>0}f(t,z) \left( \partial_z \log f(t,z) \right)^2  \D z \right) \, . \label{eq:bound infty}
\end{equation} 

The last requirement in the Aubin-Simon Lemma consists in getting very weak estimate for the time derivative $\partial_t n^\eps$. We can write
\[ \partial_t  n^\eps(t,z) + \partial_z j^\eps(t,z) = 0\, ,\] where $j^\eps(t,z) = \partial_z n^\eps(t,z) + a^\eps(t)n^\eps(t,z) $ is uniformly bounded in $L^2(0,T;L^1(\R_+))$, due to (\ref{eq:main estimate eps}) and the following inequalities: 
\[ \|a^\eps\|^2_{L^2(0,T)}\leq \|\partial_z n^\eps\|^2_{L^2\left(0,T;L^1(\R_+)\right)} \leq M \int_0^T \int_{z>0} n^\eps(t,z)(\partial_z \log n^\eps (t,z))^2  \,  \D t\D  z\, . \]
Hence $\partial_t n^\eps$ is uniformly bounded in $L^2\left(0,T;(W^{1,\infty}(\R_+))^\prime\right)$.

We introduce some useful functional spaces, endowed with their corresponding norms:
\begin{eqnarray*}
 X &=&  \{ g \in \mathcal{C}^{0,\eta}(\R _+) : \, z^{1/2}g(z) \in L^\infty(\R _+) \}\, ,\\
 B &=&  \mathcal{C}^{0}(\R _+)\cap L^\infty(\R _+) \, ,\\
 Y &=& \left(W^{1, \infty }(\R _+)\right) ^\prime\, .
\end{eqnarray*} 
It is straightforward to check that $X$ is compactly embedded in $B$. 

Combining the above estimates (\ref{eq:moment eps}--\ref{eq:main estimate eps}--\ref{eq:equicont}--\ref{eq:bound infty}) we obtain that $n^\eps$ is bounded in $L^2(0, T;X)$ uniformly with respect to $\eps$. 
The Aubin-Simon Lemma ensures that, up to extracting a subsequence, $n^\eps$ converges strongly in $L^2(0, T;B)$ towards some $n$. 
From uniform convergence of $n^\eps$, we deduce that $a^\eps(t) \to n(t,0)$ strongly in $L^2(0,T)$. Hence we can pass to the limit in the nonlinear term $a^\eps(t) n^\eps(t,z)$ in the weak formulation. 

To conclude we  verify that the {\em a priori} estimates given in Proposition \ref{apriori} are valid after passing to the limit $\eps\to 0$. From the strong convergence in $L^2(0,T;B)$ we deduce that, up to extracting a subsequence that we do not relabel,   
\[\lim_{\eps \to 0} \int_{z>0}  n^\eps(t,z) \left( \log n^\eps(t,z)\right) _+ \D z  =  \int_{z>0}  n (t,z) \left( \log n (t,z)\right) _+ \D z   \, , \quad {\rm a.e.}\; t\in (0,T)\, . \]
On the other hand, 
we use the convex character of the functional (see \cite{BDP} and the references therein) $\int_{z>0} f(z) \left( \partial _z \log f(z) \right)^2 \D z = 4 \int_{z>0}  \left( \partial _z \sqrt{ f(z)} \right)^2 \D z $. 
We have finally,
\[\liminf_{\eps \to 0} \int_0^t \int_{z>0}  n^\eps(s,z) \left( \partial _z \log n^\eps(s,z) \right)^2 \D z \D s \geq 
\int_0^t \int_{z>0}  n(s,z) \left( \partial _z \log n(s,z) \right)^2 \D z \D s  \, .\]
So the {\em a priori} estimate (\ref{eq:main estimate}) is valid a.e. $t\in (0,T)$.

\subsection{Long-time behaviour for the critical and the subcritical cases}\label{sec:StSt}

In this Section, we investigate long-time behaviour of solutions in the case $M\leq1$ using entropy methods. We distinguish between the critical case (no need to rescale) and the sub-critical case (self-similar diffusive scaling).

We stress out that the method for proving global existence in the critical case $M=1$ strongly relies on the entropy estimate \eqref{eq:dissip ent M=1}. This is why we analyse the global existence and the long time behaviour all in all.

\subsubsection{The critical case: global existence and asymptotic convergence}
\label{sec:M=1}

The main inequality we have used so far in order to prove global existence is (\ref{ineq:trace:0}). Equality occurs if $\log n(t,z)$ is linear w.r.t. $z$: there exists $\alpha (t)>0$ such that $n(t,z) = M \alpha(t) \exp(-\alpha(t) z)$. In fact the boundary condition (\ref{eq:1D}) implies $M = 1$. On the other hand the stationary states to equation (\ref{eq:1D}) are precisely the one-parameter family:
\[h_\alpha(z)=\alpha \exp\left(-\alpha z\right)\, , \quad \alpha>0 \, .\]
This motivates to introduce the relative entropy: 
\[
\bH(t) =\int_{z>0}\frac{n(t,z)}{h_\alpha (z)} \log \left(\frac{n(t,z)}{h_\alpha (z)}\right)h_\alpha (z)\D  z \\
=\int_{z>0} n(t,z)  \log n(t,z) \D z + \alpha \bJ(t) - \log \alpha \, .\]

Recalling (\ref{faible11}), we notice that the first momentum of density is conserved in the case $M = 1$: $\bJ(t) = \bJ(0)$.
This prescribes the value for $\alpha$ provided we can pass to the limit $t\to \infty$: $\alpha^{-1} =  \bJ(0)$.
We also recall the formal computation giving the time evolution of the relative entropy (\ref{eq:entropy dissipation}):
\begin{eqnarray}
\frac{\D }{\D t} \bH(t)&=& - \int_{z>0} n(t,z)\left( \partial_z \log n(t,z) \right)^2 \D  z   +    n(t,0) ^2 \label{eq:dissip ent M=1} \\
&=& - \int_{z>0} n(t,z)\left(\partial_z \log n(t,z) + n(t,0)\right)^2 \D  z  \leq   0\, . \nonumber
\end{eqnarray}
The Jensen's inequality yields $\bH(t)\geq 0$, so we have $0\leq \bH(t)\leq \bH(0)$.
We deduce from Lemma \ref{BDP} that the quantity $\int_{z>0} n(t,z)  (\log n(t,z))_+ \D z$ is uniformly bounded by some constant denoted by $C_0$:
\[\int_{z>0} n(t,z)  (\log n(t,z))_+ \D z \leq C_0 \, , \quad {\rm a.e.}\; t\in (0,+\infty)\, .\]

The method of proving convergence in relative entropy towards $h_\alpha$ is as follows. We first gain {\em a priori} estimates which enable to pass to the limit after extraction as in Section \ref{sec:aubin}. For this we update the estimates in Section \ref{sec:M<1} with the key information that the entropy $\bH$ is uniformly bounded. The identification of the limit requires more information concerning the behaviour of the density at infinity. We use the fact that the first momentum drives the evolution of the second one. Finally we conclude that the entropy converges to 0 along some subsequence. Since it is non-increasing, it converges to 0 globally.

\paragraph{A-priori bound} We cannot follow the strategy developped in Section \ref{sec:M<1} since we crucially used $M<1$.
We need to gain some control on the dissipation (\ref{eq:dissip ent M=1}) which is the competition of two opposite contributions that are nearly equally balanced. For that purpose we introduce the function $\Lambda: \R _+ \to \R _+$ such that $\Lambda(0) = 0$ and $\Lambda ' (u)= \left( \log u\right) ^{1/2}_+$. It is non-decreasing, convex and superlinear. Thus there exists $A\in R$ such that $\Lambda(u)^2\geq 2 C_0  u^2 $ for all $u\geq A$. Adapting (\ref{ineq:trace:0}) to our context we get 
\begin{eqnarray} 
\Lambda(n(t,0))^2 &=& \left(-\int_{z>0} \partial _z \left(\Lambda (n(t,z)) \right) \D z\right) ^2 \nonumber \\
&=& \left(-\int_{z>0}   \Lambda' (n(t,z)) n(t,z)  \partial _z (\log n(t,z))  \D z\right) ^2 \nonumber \\
&\le &   \left(\int_{z>0} n(t,z) |\Lambda' (n(t,z))| ^2  \D z\right) \left(\int_{z>0} n(t,z)\left( \partial_z \log n(t,z) \right)^2 \D z  \right)\nonumber\\
& \leq &\left( \int_{z>0} n(t,z) (\log n(t,z))_+\D z \right) \left(\int_{z>0} n(t,z)\left( \partial_z \log n(t,z) \right)^2 \D z \right) \nonumber \\
& \leq &C_0 \int_{z>0} n(t,z)\left( \partial_z \log n(t,z) \right)^2 \D z  \, . \label{eq:trace Lambda:ajout:nico} 
\end{eqnarray}
From (\ref{eq:dissip ent M=1}) and (\ref{eq:trace Lambda:ajout:nico}), we deduce that
\[\frac{ \D}{\D t} \int_{z>0} n(t,z) \log n(t,z) \D  z \leq \left\{\begin{array}{ll} 0 & \mbox{if} \quad n(t,0)\leq A \, ,\\  - \frac{\Lambda(n(t,0))^2}{ C_0} + n(t,0)^2 \leq -n(t,0)^2 &  \mbox{if} \quad n(t,0)\geq A \, .\end{array}\right.\] 
We introduce the set $E = \{t: n(t,0) \geq A\}$. We have obtained the estimate
\begin{equation} 
\int_E n(t,0)^2\D t \leq \int_{z>0} n_0(z) \log n_0(z) \D  z\,, \label{eq:L2 n(t,0):ajout:nico}
\end{equation}  
thus $n(t,0)$ cannot be too large (in $L^2$ sense). 

We deduce from \eqref{eq:dissip ent M=1} and \eqref{eq:L2 n(t,0):ajout:nico} that $\int _0 ^t\int_{z>0} n(s,z)  \left(\partial _z\log n(s,z)\right)^2 \D z \D s$ is bounded for all $t \in (0,T)$. The previous statements prove that Proposition \ref{apriori} remains valid in the case $M=1$. Next, the existence proof, in the case $M=1$, is similar to the case $M<1$ and we do not repeat it here.


\paragraph{Passing to the  limit}

Let $N$ be any integer. We translate the solution in time: we define $u_N(s,x) = n(N + s,x)$. The function $\bH(t)$ is non-increasing and bounded below by zero. Therefore the entropy dissipation (\ref{eq:dissip ent M=1}) converges to zero in an averaged sense. The estimate 
\[ \int_N^{N+1} \left( \int_{z>0} n(t,z)\left( \partial_z \log n(t,z) \right)^2 \D  z   -    n(t,0) ^2 \right)\D t = \bH(N)- \bH(N+1)  \xrightarrow[N\to \infty]{} 0\, ,  \]
reads 
\begin{equation} 
\int_0^{1} \left( \int_{z>0} u_N(s,z)\left( \partial_z \log u_N(s,z) \right)^2 \D  z   -    u_N(s,0) ^2 \right)\D s  \xrightarrow[N\to \infty]{} 0 \, . \label{eq:dissipation u0} 
\end{equation}
We deduce from (\ref{eq:L2 n(t,0):ajout:nico}) that $u_N(s,0)$ is bounded in $L^2(0,1)$ uniformly w.r.t. $N$. Hence both terms are bounded in (\ref{eq:dissipation u0}).
This enables to pass to the limit as in Section \ref{sec:aubin}. Up to extracting a subsequence (labelled with $N'$) there exists $u_\infty$ such that $u_{N'}\to u_\infty$ strongly in $L^2(0,1;B)$:
\begin{equation} \int_0^1 \|u_{N'}(s) - u_\infty(s)\|_{B}^2\D  s \to 0\, . \label{eq:limit} \end{equation}
We can pass to the limit in each term of the averaged dissipation:
\begin{eqnarray*} 
0 = \liminf_{N'\to \infty}\int_0^{1} \left( \int_{z>0} u_{N'}(s,z)\left( \partial_z \log u_{N'}(s,z) \right)^2 \D  z   -    u_{N'}(s,0) ^2  \right)  \D s \\ \geq \int_0^{1} \left( \int_{z>0} u_{\infty}(s,z)\left( \partial_z \log u_{\infty}(s,z) \right)^2 \D  z   -    u_{\infty}(s,0) ^2 \right) \D s  \geq 0 \, . 
\end{eqnarray*}
We have used the $L^2(0,T;L^\infty(\R_+))$ strong convergence (\ref{eq:limit}) to pass to the limit in the nonlinear term $u_{N'}(s,0)^2$, and also the convexity of the functional $\int_{z>0} f(z) \left( \partial _z \log f(z) \right)^2 \D z$ (see Section \ref{sec:aubin}).

\paragraph{Identification of the limit}

We deduce that $u_\infty$ satisfies almost everywhere
\[ u_\infty(s,z) = \beta(s) \exp( - \alpha(s) z)\,,\quad \alpha(s),\beta(s) >0\, . \]
To determine $\alpha(s)$ and $\beta(s)$ we shall use the conservations of mass and first momentum. Since the first momentum is uniformly bounded, we have that $M = \lim \int_{z>0} u_{N'}(t,z)\D z = \int_{z>0} u_{\infty}(t,z) \D z$. This yields $\alpha(s) = \beta(s)$. 

We have proved so far that we can always extract a subsequence such that $u_{N'}(s,z)$ approaches $u_\infty(s,z)$ in $L^2(0,T;B)$. 
We explain below why it is delicate to derive $\alpha(s) = \alpha = \bJ(0)^{-1}$ without any better control of the density $n(t,z)$ as $z\to +\infty$. Suppose we have $\alpha(s)\equiv \overline \alpha$ and the convergence $u_N(s,z)\to u_\infty(z)$ is uniform. We would have on the one hand, 
\begin{equation} 
\alpha^{-1} = \liminf \int_{z>0} z u_{N'}(t,z)\D z \geq  \int_{z>0} z u_{\infty}(z) \D z = (\overline\alpha)^{-1} \, , 
\label{eq:weak convergence moment}
\end{equation}
and on the other hand,
\[  
0\leq \lim \bH(t) = \int_{z>0} u_\infty(z) \log u_\infty(z)\D z + 1 - \log \alpha = \log\overline\alpha  - \log\alpha \, .
\]
We would deduce $\overline \alpha\geq\alpha$ which is the same as (\ref{eq:weak convergence moment}).

In the case $\int_{z>0} z^2 n_0(z)\D z < +\infty$, let us examinate the evolution of the second momentum. We simply have
\begin{equation}
\frac{1}{2}\frac {\D}{\D t} \int_{z>0} z^2 n(t,z)\D z  = M - n(t,0) \bJ(t) = 1 - n(t,0) \alpha^{-1}\, . \label{eq:evol I}
\end{equation}
The idea is to pass to the pointwise limit $n(t,0)\to \overline \alpha$. If $\overline \alpha> \alpha$, the right-hand side of (\ref{eq:evol I}) becomes asymptotically $1 - \overline \alpha\alpha^{-1} < 0$ which leads to a contradiction.   

Let introduce the notation $\bI(t) = \int_{z>0} (z^2/2) n(t,z)\D z$. 
We have 
\[ \bI(N+1) - \bI(N) = \int_N^{N+1} \left(1 - n(t,0)\alpha^{-1}\right) \D t = \int_0^{1} \left(1 - u_N(s,0)\alpha^{-1}\right) \D s\, .  \]
Since $\bI$ is a non-negative quantity, we clearly have
$ \limsup \bI(N+1) - \bI(N) \geq 0$.
Furthermore we have 
$ \limsup \bI(N+1) - \bI(N) \leq 0$.
To see this, assume on the contrary that $ \limsup \bI(N+1) - \bI(N) = \delta >0$. We can extract a converging subsequence. Keeping the same notations as above, we have $\lim \bI(N'+1) - \bI(N') = \delta$. We can pass to the limit similarly (up to further extracting) in the following average quantities:
\begin{eqnarray}
\alpha^{-1} &=& \liminf \int_0^1 \int_{z>0} z u_{N'}(t,z)\D z \D s \nonumber \\
&\geq & \int_0^1  \int_{z>0} z u_{\infty}(s,z) \D z \D s= \int _0^1(\alpha(s))^{-1}\D s\, , \label{eq:lim:crit:1} \\
\delta &=& \lim \int_0^{1} \left(1 - u_{N'}(s,0)\alpha^{-1}\right) \D s = 1 - \alpha^{-1} \int _0^1 \alpha(s)\D s\, . \label{eq:lim:crit:2}
\end{eqnarray}
Inequality (\ref{eq:lim:crit:1}) yields $\int_0^1 \alpha(s)\D s \geq \alpha $ by Jensen's inequality. This is in contradiction with (\ref{eq:lim:crit:2}).

We conclude that $\limsup \bI(N+1) - \bI(N) = 0$. We extract a converging subsequence, such that $\lim \bI(N'+1) - \bI(N') = 0$. 
Hence we obtain (\ref{eq:lim:crit:1}) and (\ref{eq:lim:crit:2}) with $\delta = 0$. The equality case in Jensen's inequality yields $\alpha(s)\equiv \alpha$.

\paragraph{Asymptotic convergence (without any extraction)}

We have proved that there exists a subsequence such that $u_{N'}$ converges towards $u_\infty  = h_\alpha $ in $L^2(0,T;B)$.
We cannot pass to the limit pointwise in time from $L^2$ convergence. However there exists a sequence of times $s_{N'}\in (0,1)$ such that $\|u_{N'}(s_{N'}) - u_\infty\|_{B}\to 0$. This includes uniform convergence and uniform decay at infinity. We can pass to the limit in the entropy term and we obtain $H(u_{N'}(s_{N'})) \to H(u_\infty) = 0$. It means 
$H[n(N' + s_{N'})] \to 0$. Using the non-increasing property of the entropy we have $H[n(t)] \to 0$ as $t\to \infty$ (without extracting any subsequence). 

Finally we recall the Csiszar-Kullback inequality  \cite{Csiszar,Kullback}. For any non-negative functions $f,g \in L^1(\R_+)$ such that $\int_{x>0} f(x) \D x=\int_{x>0} g(x) \D x=1$, the following inequality holds true, 
\[  \|f-g \|^2_{L^1(\R_+)}\le 4 \int_{x>0} f(x)\log \left(\frac{f(x)}{g(x)}\right) \D x \, . \]
This yields $\|n(t) - h_{\alpha}\|_{L^1}\to 0$.

\subsubsection{Self-similar decay in the sub-critical case}

\label{sec:self-similar}

In the sub-critical case $M<1$ the density $n(t,z)$ is expected to decay with a self-similar diffusion scaling \cite{BDP}. To catch this asymptotic behaviour we rescale the density accordingly:
\[ n(t,z) = \frac{1}{\sqrt{1+2t}} u\left( \log\sqrt{1+2t}  , \frac{z}{\sqrt{1+2t}}\right)\, . \]
The new density $u(\tau,y)$ satisfies:
\begin{equation}\label{eq:rescaled}
\partial_\tau u(\tau,y) = \partial_{yy} u(\tau,y)   + \partial_y \left(y u(\tau,y) \right) + u(\tau,0)\partial_y u(\tau,y) \, ,
\end{equation}
together with a no-flux boundary condition: $\partial _y u (\tau,0)+u(\tau,0)^2 =0$.
The additionnal left-sided drift contributes to confine the mass in the new frame $(\tau,y)$. The unique stationary equilibrium in this new setting can be computed explicitely: 
\begin{equation} \label{eq:stat state rescaled}
G_\alpha(y) = \alpha\exp\left(-\alpha y - {y^2}/2\right)\, ,
\end{equation}  
where $\alpha$ is uniquely defined by the condition $\int_{y>0} G_\alpha(y)\D y = M$. This rewrites $P(\alpha) = M$, $P$ being an increasing function defined as follows:
\[ P(\alpha) = \int_{y>0}  \exp\left(-  y - \frac{y^2}{2\alpha^2}\right)\D  y \, , \quad \left\{\begin{array}{r} \lim_{\alpha \to 0} P(\alpha) = 0 \\ \lim_{\alpha \to +\infty} P(\alpha) = 1 \end{array}\right. \,   . \]
We re-define the relative entropy and the first momentum in the 
rescaled frame:
\begin{eqnarray*} 
\bH(\tau) &=& \int_{y>0}\frac{u(\tau,y)}{G_\alpha (y)} \log \left(\frac{u(\tau,y)}{G_\alpha (y)}\right)G_\alpha (y)\D  y\, ,\\
\bJ(\tau) &=& \int_{y>0} y u(\tau,y)\D y\, . 
\end{eqnarray*} 
We also introduce a Lyapunov functional for equation (\ref{eq:rescaled}):
\[ \bL(\tau)=\bH(\tau)  + \frac{1}{2(1-M)}  \left( \bJ(\tau) - \alpha(1-M) \right)^2 \, . \]
Note that it is a non-negative quantity by Jensen's inequality.

\begin{lemma} \label{lem:entropy u}
The Lyapunov functional $\bL$ is non-increasing:
\begin{equation*}
\frac{\D}{\D t} \bL(\tau) = - \bD(\tau) \leq 0\, . 
\end{equation*}
The dissipation reads as follows
\begin{eqnarray}
 \bD(\tau) &=& \int_{y>0} u(\tau,y) \left(   \partial_y \log u(\tau,y) + y + u(\tau,0) \right)^2\D  y \nonumber \\
 & & + \frac{1}{(1-M)}\left(\frac{\D }{\D \tau} \bJ(\tau)\right)^2\, . 
 \label{eq:dissipation u}
 \end{eqnarray}
\end{lemma}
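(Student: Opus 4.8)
The plan is to differentiate $\bL$ along the flow and to reorganise the outcome into the advertised sum of squares. The starting point is to recast \eqref{eq:rescaled} in conservative form $\partial_\tau u = \partial_y J$, with flux $J(\tau,y) = \partial_y u(\tau,y) + y\,u(\tau,y) + u(\tau,0)\,u(\tau,y)$, and to notice that the no-flux boundary condition $\partial_y u(\tau,0) + u(\tau,0)^2 = 0$ is precisely $J(\tau,0)=0$. This ensures the conservation $\int_{y>0} u(\tau,y)\D y = M$ and kills all the boundary contributions at $y=0$ in the integrations by parts below.

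First I would make $\bH$ explicit. Since $\log G_\alpha(y) = \log\alpha - \alpha y - y^2/2$, one has
\[ \bH(\tau) = \int_{y>0} u\log u\,\D y - M\log\alpha + \alpha\,\bJ(\tau) + \frac12\int_{y>0} y^2 u\,\D y\, . \]
Differentiating each term against $\partial_\tau u = \partial_y J$ and integrating by parts (using $J(\tau,0)=0$ and decay at infinity) reduces the computation to three elementary identities, namely $\int_{y>0}\partial_y u\,\D y = -u(\tau,0)$, $\int_{y>0} y\,\partial_y u\,\D y = -M$, and the first-moment evolution
\[ \frac{\D}{\D\tau}\bJ(\tau) = (1-M)\,u(\tau,0) - \bJ(\tau)\, . \]
The entropy term then contributes $-\int u(\partial_y\log u)^2 + u(\tau,0)^2 + M$, the term $\alpha\bJ$ contributes $\alpha\,\bJ'(\tau)$, and the second moment contributes $M - \int y^2 u - u(\tau,0)\bJ$.

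Next I would add the derivative of the quadratic correction, $\frac{1}{1-M}(\bJ - \alpha(1-M))\,\bJ'$, substitute the expression for $\bJ'$, and collect. All terms proportional to $\alpha$, together with the indefinite cross-term $u(\tau,0)\bJ$, cancel, leaving
\[ \frac{\D}{\D\tau}\bL(\tau) = -\int_{y>0} u(\partial_y\log u)^2\D y - \int_{y>0} y^2 u\,\D y + u(\tau,0)^2 + 2M - \frac{\bJ(\tau)^2}{1-M}\, . \]
It then remains to check, by expanding $\int_{y>0} u(\partial_y\log u + y + u(\tau,0))^2\D y$ with the same three identities and using $\frac{1}{1-M}(\bJ')^2 = (1-M)u(\tau,0)^2 - 2u(\tau,0)\bJ + \frac{\bJ^2}{1-M}$, that the right-hand side above equals $-\bD(\tau)$. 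This completion of the square is exactly what fixes the coefficient $\tfrac{1}{2(1-M)}$ in $\bL$: it is tuned so that the sign-indefinite cross-terms $2\int y\,\partial_y u$ and $2u(\tau,0)\bJ$ get absorbed. Non-negativity of $\bD$ is then automatic, since $M<1$ makes $\tfrac{1}{1-M}(\bJ')^2\geq 0$.

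The real difficulty is not the algebra but its justification: the integrations by parts demand that the boundary terms at infinity vanish (decay of $u$, of $y\,u$ and of $y^2 J$ as $y\to+\infty$) and that $\int y^2 u$ be finite along the flow. I would therefore first carry out the identity on the regularised approximations of Section~\ref{sec:aubin}, where these integrability and decay properties are available, and then pass to the limit using the entropy and moment a priori bounds, keeping the dissipation in the limit via the convexity of $f\mapsto\int u(\partial_y\log u)^2$, exactly as in the existence argument.
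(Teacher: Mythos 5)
Your proposal is correct and follows essentially the same route as the paper: differentiate the relative entropy against the conservative form of the equation, use the first-moment evolution $\frac{\D}{\D\tau}\bJ=(1-M)u(\tau,0)-\bJ$ to absorb the sign-indefinite terms into the quadratic correction, and identify the result with $-\bD$ by completing squares (the paper completes the square $\left(\partial_y\log u+y+u(\tau,0)\right)^2$ inside the integrand first and then reorganises the remainder via two identities, whereas you expand everything and match at the end — an algebraically equivalent bookkeeping). Your closing remark on justifying the integrations by parts through the regularised approximations is a sensible addition that the paper leaves implicit.
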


\begin{proof}
We compute the evolution of the entropy as previously:
\begin{eqnarray}
\frac{\D }{\D \tau} \bH(\tau)&=& \int_{y>0} \partial _\tau u(\tau,y)\left( \log (u(\tau,y))  +\alpha y + \frac{y^2}{2} \right) \D  y \nonumber \\
&=& - \int_{y>0}  \left( \partial_y u(\tau,y) + u(\tau,0) u(\tau,y) + y u(\tau,y) \right) \left( \frac{ \partial_y u(\tau,y)}{u(\tau,y)}  +\alpha  + y \right) \D  y \nonumber \\
& =&  - \int_{y>0} u(\tau,y) \left( \partial_y \log u(\tau,y) + y  \right)^2\D  y +
u(\tau,0)^2 - u(\tau,0) \bJ(\tau) \nonumber \\
& & \qquad +\alpha u(\tau,0) - \alpha \bJ(\tau)  - \alpha u(\tau,0) M  \nonumber\\
& = & - \int_{y>0} u(\tau,y) \left( \partial_y \log u(\tau,y) + y +u(\tau,0) \right)^2\D  y \nonumber\\
& &\qquad +
(M-1)u(\tau,0)^2 + u(\tau,0) \bJ(\tau) +\alpha (1-M) u(\tau,0) - \alpha \bJ(\tau) \, . \label{eq:u0}
\end{eqnarray}
Moreover, the time evolution of the first momentum reads in the rescaled frame:
\[\frac{\D }{\D \tau}\bJ(\tau)  = (1-M)u(\tau,0) - \bJ(\tau)\, .\]
As compared to (\ref{eq:moment0}) the additional contribution is due to the rescaling drift. We can eliminate $u(\tau,0)$ from (\ref{eq:u0}) in the two following steps:
\begin{eqnarray}
u(\tau,0) \bJ(\tau) +\alpha (1-M) u(\tau,0) - \alpha \bJ(\tau)   
& = &
\frac{\bJ(\tau)}{(1-M)}\frac{\D }{\D \tau} \bJ(\tau)+ \frac{\bJ(\tau)^2 }{(1-M)}+ \alpha \frac{\D }{\D \tau} \bJ(\tau) \nonumber \\
& =& -\frac{\D }{\D \tau}  \frac{\left( \bJ(\tau) - \alpha(1-M) \right)^2}{2(1-M)}  
\nonumber\\ 
& & 
+ \frac{2\bJ(\tau)}{(1-M)}\frac{\D }{\D \tau} \bJ(\tau)+ \frac{\bJ(\tau)^2}{(1-M)}  \, , 
\label{eq:entropy J}
\end{eqnarray} 
and
\begin{equation} 
- \frac{1}{(1-M)}\left(\frac{\D }{\D \tau} \bJ(\tau)\right)^2   
=(M-1)u(\tau,0)^2 +\frac{2\bJ(\tau)}{(1-M)}\frac{\D }{\D \tau} \bJ(\tau)+ \frac{\bJ(\tau)^2 }{(1-M)}
 \, .  \label{eq:dissipation J}
\end{equation}
Combining (\ref{eq:u0}) -- (\ref{eq:entropy J}) -- (\ref{eq:dissipation J}) the proof of Lemma \ref{lem:entropy u} is complete.
\end{proof}

To prove convergence of $u(\tau,\cdot)$ towards $G_\alpha$ we develop the same strategy as in Section \ref{sec:M=1} for the critical case $M = 1$. The main argument (apart from passing to the limit) consists in identifying the possible configurations $u_\infty$ for which the dissipation $\bD$ vanishes. In fact this occurs if and only if both terms in (\ref{eq:dissipation u}) are zero. This means that $\bJ_\infty(\tau) = (1 - M) u_\infty(\tau,0)$ on the one hand, and on the other hand,
\[ \partial_y \log u_\infty(\tau,y)  + y + u_\infty(\tau,0) = 0  \, .\]
We obtain that $u_\infty \equiv G_\alpha$, where $G_\alpha$ is given by (\ref{eq:stat state rescaled}). To pass to the limit as in Section  \ref{sec:M=1} we need to gain some good control of $\int_{y>0} u(\tau,y) \left(   \partial_y \log u(\tau,y) \right)^2\D y$ from the dissipation term $\bD$. The situation here is simpler than in Section \ref{sec:M=1} since the mass is sub-critical. The argument goes as follows
\begin{eqnarray*}
& &\int_{y>0} u(\tau,y) \left(   \partial_y \log u(\tau,y)  + y + u(\tau,0) \right)^2\D  y
\\ &= &
\int_{y>0} u(\tau,y) \left(   \partial_y \log u(\tau,y) \right)^2\D y + (M-2) u(\tau,0)^2  + 2 u(\tau,0) \bJ(\tau )\\
 & & + \int_{y>0} y^2 u(\tau,y)\D y - 2M \\
& \geq &  \left(M + \frac{1}{M}-2\right) u(\tau,0)^2   - 2M \, ,
\end{eqnarray*}
where we have used inequality (\ref{ineq:trace:0}). The quantity $M + M^{-1} - 2$ is positive since $M<1$. Hence, recalling Proposition \ref{apriori} we can prove directly that $u(\cdot,0)$ belongs to $L^2$ locally in time (this was the purpose of (\ref{eq:trace Lambda:ajout:nico}) -- (\ref{eq:L2 n(t,0):ajout:nico})). 

Finally, we obtain that $\bL$ converges to zero as $\tau\to+\infty$. So $u(\tau,\cdot)$ converges towards $G_\alpha$ in entropy sense.

\subsection{Blow-up of solutions for super-critical mass}\label{sec:BU}

To prove that solutions blow-up in finite time when mass is super-critical $M>1$ and $n_0$ is non-increasing, we show that the first momentum of $n(t,z)$ cannot remain positive for all time. This technique was first used by Nagai \cite{Nagai}, then by many authors in various contexts (see \cite{Biler95,BilerWoyczynski,Corrias.Perthame.Zaag,DP,CieslakLaurencot} for instance).

The assumption that $n_0$ is a non-increasing function guarantees that $n(t,\cdot)$ is also non-increasing  for any time $t>0$ due to the maximum principle. In fact the derivative $v(t,z) = \partial_z n(t,z)$ satisfies a parabolic type equation without any source term, it is initially non-positive, and it is non-positive on the boundary due to (\ref{cl1D}).

Therefore $-\partial_z n(t,z)/n(t,0)$ is a probability density at any time $t>0$. We deduce from the Jensen's inequality the following interpolation estimate:
\[\left(\int_{z>0} z \frac{-\partial_z n(t,z)}{n(t,0)}\D  z\right)^2  \leq 
\int_{z>0} z^2 \frac{-\partial_z n(t,z)}{n(t,0)}\D  z\, .\]
It rewrites in a more convenient way as follows,
\begin{equation}\label{interpol}
M^2  \leq 2 n(t,0)  \int_{z>0} z   n(t,z) \D  z \, .
\end{equation}

We denote the first momentum  $\bJ(t) = \int_{z>0} z n(t,z)\D  z$. We plug (\ref{interpol}) into the evolution of the moment (\ref{faible11}):
\begin{eqnarray}
 \bJ(t)& =& \bJ(0) + (1 - M) \int_0^t n(s,0)\D s  \nonumber \\
 & \leq &\bJ(0) + \frac{(1 - M)M^2}{2} \int_0^t \frac{1}{\bJ(s)}\D s\label{eq:moment0}\, .
\end{eqnarray}
We introduce the auxiliary function $\bK(t) = \bJ(0) + (1 - M)M^2 \int_0^t \bJ(s)^{-1}\D s$. It is positive and it satisfies the following differential inequality:
\[\frac{\D }{\D t}\bK(t)  = \frac{(1 - M)M^2}{2} \frac{1}{\bJ(t)}   \leq \frac{(1 - M)M^2}{2} \frac{1}{\bK(t)} \, ,\]
hence,
\[\frac{\D }{\D t}\bK(t)^2  \leq  (1 - M)M^2\, . \]
We obtain a contradiction: the maximal time of existence $T^*$ is necessarily finite when $M>1$. On the other hand, following \cite{JL}, it can be proved  that the modulus of integrability has to become singular at $T^*$:
\[ \lim_{K\to +\infty} \left( \sup_{t\in(0,T^*)} \int_{z>0} (n(t,z)-K)_+\D z\right) >0\, . \]
Otherwise a truncation method enables to prove local existence by replacing $n$ with $(n - K)_+$ for $K$ sufficiently large.

\begin{remark}
It is natural to perform the Laplace transform on the equation (\ref{eq:1D}) $\mathcal L_z(n(t,z)) = \hat n(t,\zeta)=\int_{z>0} n(t,z) \exp(-\zeta z) \D z$. Then the occurence of blow-up is clear after transformation. We refer the reader to \cite{Calvez.Carrillo2010} where the Fourier transformation has been applied successfully to analysing a one-dimensional caricature of the two-dimensional Keller-Segel equation.
\end{remark}

\section{Variants of blow-up criteria}\label{secvariants}

In this section we determine necessary conditions for blow-up to occur for a fast decaying interaction potential (Section \ref{finite_range}) and for a finite interval (Section \ref{bounded}).

\subsection{Finite range of action}\label{finite_range}

In this part we consider the following system:
\begin{equation}\label{eqfini:fr}
\partial _t n(t,z) = \partial _{zz} n(t,z) - \partial_z\left(n(t,z) \partial_z \phi(t,z)\right)\, ,  \quad t>0\, ,  \, z\in (0,+\infty)\, , 
\end{equation}
with zero-flux at $z = 0$ and the attractive potential is given by
\begin{equation} \label{eq:phi alpha}
-\partial_{zz} \phi (t,z) + \alpha^2 \phi (t,z) = 0\, , \quad - \partial_z \phi (t,0) = n(t,0)\, .
\end{equation} 
We introduce the exponential moment of the solution:
\[  \bJ_\alpha(t) = \int_{z>0} \exp(\alpha z) n(t,z)  \D z\, . \]
\begin{proposition} \label{prop:1D:FR}
Assume $M>1$ and the exponential moment is small in the sense of criterion (\ref{eq:BU alpha}) below. Assume in addition that $\exp(-\alpha z) n_0(z)$ is a non-increasing function. Then the solution to (\ref{eqfini:fr}) -- (\ref{eq:phi alpha}) with initial data $n(0,z) = n_0(z)$ blows-up in finite time.
\end{proposition}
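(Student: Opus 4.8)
The plan is to transpose the moment method of Section~\ref{sec:BU}, replacing the first moment $\bJ$ by the exponential moment $\bJ_\alpha$, which is the moment adapted to the exponentially localised interaction kernel. First I would solve \eqref{eq:phi alpha} explicitly: its decaying solution on $(0,+\infty)$ is $\phi(t,z)=\alpha^{-1}n(t,0)e^{-\alpha z}$, so that $\partial_z\phi(t,z)=-n(t,0)e^{-\alpha z}$. Consequently the zero-flux condition at $z=0$ reduces to $\partial_z n(t,0)+n(t,0)^2=0$, exactly as in \eqref{cl1D}, and the drift term in \eqref{eqfini:fr} becomes $-\partial_z\!\big(n(t,0)\,e^{-\alpha z}\,n\big)$. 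Throughout I work with a strong solution having finite exponential moment, so that all boundary terms at infinity vanish in the integrations by parts below.

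Next I would propagate the monotonicity hypothesis, namely that $g(t,z):=e^{-\alpha z}n(t,z)$ stays non-increasing in $z$. Rewriting \eqref{eqfini:fr} in terms of $g$ gives $\partial_t g=\partial_{zz}g+\big(2\alpha+n(t,0)e^{-\alpha z}\big)\partial_z g+\alpha^2 g$, hence $p:=\partial_z g$ solves the same type of linear parabolic equation with no source term (the zeroth-order coefficient being $\alpha^2-\alpha n(t,0)e^{-\alpha z}$). Since $p(0,\cdot)\le 0$ by assumption and, using $\partial_z n(t,0)=-n(t,0)^2$, the boundary value is $p(t,0)=\partial_z n(t,0)-\alpha n(t,0)=-n(t,0)\big(n(t,0)+\alpha\big)\le 0$, the maximum principle yields $p\le 0$, exactly as in the monotonicity argument of Section~\ref{sec:BU}.

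With monotonicity available I would derive the interpolation inequality replacing \eqref{interpol}. Because $-\partial_z g\ge 0$ and $\int_{z>0}(-\partial_z g)\,\D z=g(t,0)=n(t,0)$, the measure $\D\mu:=n(t,0)^{-1}(-\partial_z g(t,z))\,\D z$ is a probability measure; integration by parts gives $\int_{z>0}e^{\alpha z}(-\partial_z g)\,\D z=n(t,0)+\alpha M$ and $\int_{z>0}e^{2\alpha z}(-\partial_z g)\,\D z=n(t,0)+2\alpha\bJ_\alpha$. Jensen's inequality $\big(\int e^{\alpha z}\,\D\mu\big)^2\le\int e^{2\alpha z}\,\D\mu$ then yields
\[
\alpha M^2\le 2\,n(t,0)\big(\bJ_\alpha(t)-M\big).
\]
In parallel, testing \eqref{eqfini:fr} against $e^{\alpha z}$ and using the same boundary relations produces the moment identity $\tfrac{\D}{\D t}\bJ_\alpha(t)=\alpha(1-M)\,n(t,0)+\alpha^2\bJ_\alpha(t)$, the analogue of \eqref{faible11}.

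Finally I would close the argument at the ODE level. Set $\psi(t)=\bJ_\alpha(t)-M\ge 0$. As $M>1$ the coefficient $\alpha(1-M)$ is negative, so inserting the lower bound $n(t,0)\ge\alpha M^2/(2\psi)$ into the moment identity gives
\[
\psi'(t)\le F(\psi(t)),\qquad F(\psi):=-\frac{\alpha^2(M-1)M^2}{2\psi}+\alpha^2(\psi+M).
\]
The function $F$ is increasing on $(0,+\infty)$ with a unique zero $\psi_*=\tfrac{M}{2}\big(\sqrt{2M-1}-1\big)>0$, and the smallness criterion is exactly $\psi(0)<\psi_*$, i.e.
\begin{equation}\label{eq:BU alpha}
\bJ_\alpha(0)<\frac{M}{2}\big(1+\sqrt{2M-1}\big)\, .
\end{equation}
Under \eqref{eq:BU alpha}, comparison with the autonomous ODE $\bar\psi'=F(\bar\psi)$, $\bar\psi(0)=\psi(0)$, gives $\psi(t)\le\bar\psi(t)$, and $\bar\psi$ reaches $0$ at the finite time $T=\int_0^{\psi(0)}\big(-F(\psi)\big)^{-1}\,\D\psi<\infty$ (the integrand extends continuously by $0$ at the origin and stays bounded on $(0,\psi(0)]$). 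Squeezed between $0$ and $\bar\psi$, we get $\psi(t)\to 0$, i.e.\ $\bJ_\alpha(t)\to M$, and the interpolation inequality then forces $n(t,0)\to+\infty$; hence the solution cannot be continued past $T$ and blows up in finite time. The two delicate points are making the maximum-principle propagation of $p\le 0$ fully rigorous on the half-line (control of the boundary value at $z=0$ together with the decay at infinity), and upgrading the scalar inequality $\psi'\le F(\psi)$ from mere decay to genuine finite-time blow-up, which hinges on the integrability of $1/(-F)$ near $\psi=0$ and on reading $\psi\to 0$ as $n(t,0)\to+\infty$ through the interpolation bound.
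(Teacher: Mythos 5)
Your proof is correct and follows the same overall architecture as the paper's: explicit resolution of the potential giving the drift $-n(t,0)e^{-\alpha z}$, propagation of the monotonicity of $u(t,z)=e^{-\alpha z}n(t,z)$ by the maximum principle applied to $\partial_z u$ (your parabolic equation and boundary value $-n(t,0)(n(t,0)+\alpha)$ match the paper's exactly), an interpolation inequality giving a lower bound on $n(t,0)$ in terms of $\bJ_\alpha$, the moment identity $\frac{\D}{\D t}\bJ_\alpha=\alpha^2\bJ_\alpha+\alpha(1-M)n(t,0)$, and a contradiction at the ODE level. The genuine difference is the interpolation step. The paper adapts \eqref{interpol} by applying Jensen to the first and second moments of the probability measure $-\partial_z u/u(t,0)$, then uses the pointwise bound $z^2\le(e^{2\alpha z}-1-2\alpha z)/(2\alpha^2)$ and a Cauchy--Schwarz step $M^2\le \bJ_\alpha\int e^{-\alpha z}n$, arriving at $M^4\le\alpha^{-1}\bJ_\alpha^2\,n(t,0)\bigl(\bJ_\alpha-M^2/\bJ_\alpha\bigr)$ and hence the criterion $\frac{\bJ_\alpha(0)^4}{M^4}\bigl(1-\frac{M^2}{\bJ_\alpha(0)^2}\bigr)<M-1$. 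You instead apply Jensen directly to the exponential weights $e^{\alpha z}$ versus $e^{2\alpha z}$ against the same probability measure, which after two clean integrations by parts yields $\alpha M^2\le 2n(t,0)(\bJ_\alpha-M)$. This is tighter: your resulting criterion $\bJ_\alpha(0)<\frac{M}{2}(1+\sqrt{2M-1})$ is strictly less restrictive than the paper's for every $M>1$ (e.g.\ for $M=2$ it allows $\bJ_\alpha(0)$ up to $\approx 2.73$ versus $\approx 2.54$, and it scales like $M^{3/2}$ rather than $M^{5/4}$ for large $M$), so you actually prove a slightly stronger statement. Your ODE closure via the comparison function $F$ and the integrability of $1/(-F)$ near $\psi=0$ is also more carefully justified than the paper's terse ``obstruction to global existence'' conclusion; the final contradiction (that $\bJ_\alpha>M$ strictly for any nontrivial $L^1$ density, while the comparison forces $\bJ_\alpha\to M$ in finite time) is sound.
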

\begin{proof}
The attractive field is given by $\partial_z \phi(t,z) = -\exp(-\alpha z) n(t,0)$.
Similarly to the proof of Theorem \ref{th:1D BU}, we compute the time derivative of $ \bJ_\alpha(t)$: 
\[ \frac{\D }{\D t} \bJ_\alpha (t)  =  \alpha ^2 \bJ_\alpha(t) + \alpha n(t,0)(1-M)\, . \]
We check that the function $u(t,z) = \exp(-\alpha z )n(t,z)$ is decreasing w.r.t. $z$ for all time $t>0$. For this purpose we write the equation for $v(t,z) = \partial_z u(t,z)$. This reads as follows
\begin{eqnarray*}  
\partial_t u(t,z) &=& \partial_{zz} u(t,z) + 2 \alpha \partial_{z} u(t,z) + \alpha^2 u(t,z) + \exp(-\alpha z) n(t,0) \partial_z u(t,z)\, , \\
\partial_t v(t,z) &=& \partial_{zz} v(t,z) + 2 \alpha \partial_{z} v(t,z) + \alpha^2 v(t,z)  + \exp(-\alpha z) n(t,0) \partial_z v(t,z) \\
& &- \alpha \exp(-\alpha z) n(t,0) v(t,z) \, .
\end{eqnarray*} 
Since the boundary condition reads $v(t,0) = -\alpha n(t,0)- n(t,0)^2\leq 0$ and the above parabolic equation preserves non-positivity we deduce that $v(t,z)\leq 0$ if $v(0,z)\leq 0$.

We can adapt the inequality (\ref{interpol}) to the function $u(t,z)$ and we obtain
\begin{eqnarray*} 
M^4 &\leq &\left(\int_{z>0} \exp(\alpha z) n(t,z)  \D  z\right)^2 \left(\int_{z>0} u(t,z)  \D  z\right)^2  \\
&\leq &  \bJ_\alpha(t)^2 n(t,0)^2   \left(\int_{z>0} z \frac{-\partial_z u(t,z)}{u(t,0)} \D  z\right)^2\\
& \leq & \bJ_\alpha(t)^2 n(t,0)^2 \int_{z>0} \left(\frac{\exp(2\alpha z) - 1 - 2\alpha z}{2\alpha^2}\right) \frac{-\partial_z u(t,z)}{u(t,0)} \D  z \\ 
& \leq &\frac{1}{\alpha} \bJ_\alpha(t)^2 n(t,0) \int_{z>0} \left( \exp (\alpha z)- \exp (-\alpha z)  \right) n(t,z)\D  z  \\
& \leq &  \frac{1}{\alpha} \bJ_\alpha(t)^2 n(t,0) \left(   \bJ_\alpha(t) - \frac {M^2}{ \bJ_\alpha(t)}\right) \, .    
\end{eqnarray*}
Finally, when $M>1$ we obtain that:
\begin{eqnarray*}
\frac{\D }{\D t}\bJ_\alpha(t) & \le  &\alpha^2  \bJ_\alpha(t) + \frac{\alpha^2(1 - M)M^4}{  \bJ_\alpha(t)^3 \left( 1 - \frac{M^2}{\bJ_\alpha(t)^2}\right) } \, . 
\end{eqnarray*}
Notice that $ \bJ_\alpha(0) >M$ by definition. We get an obstruction to global existence if the following condition holds true,
\begin{equation}\label{eq:BU alpha}
 \frac{ \bJ_\alpha(0)^4}{M^4} \left( 1 - \frac {M^2}{ \bJ_\alpha(0)^2} \right) < (M-1) \, .
\end{equation} 
\end{proof}

\subsection{Finite interval}\label{bounded}

In this part we consider the equation (\ref{eq1D}) on a finite interval $(0,L)$ for some $L>0$, namely,
\begin{equation}\label{eqfini}
\partial _t n(t,z) = \partial _{zz} n(t,z) + (n(t,0)-n(t,L)) \partial _z n(t,z)\, , \quad t >0\, , \, x\in (0,L)\, ,
\end{equation}
together with $n(t=0,z) = n_0(z)\geq 0$ and zero-flux boundary conditions at both sides of the interval.

Equilibrium configurations are given by the family of functions:
\begin{equation} \label{eq:h (0,L)}
h(z) = \alpha \exp(- (\alpha - \beta)z )\, , \quad \beta = \alpha \exp(- (\alpha - \beta)L)\, . 
\end{equation}
There are two possibilities, either $\alpha = \beta$ and $h$ is constant, or $\alpha \neq \beta$ and $M = \int_0^L h(z) \D z = 1$. Observe that given $\alpha >0$ there exists a unique $\beta$ satisfying (\ref{eq:h (0,L)}). If $\alpha L < 1$ then $\beta >\alpha$ ($h$ is increasing), whereas if $\alpha L >1$ then $\beta < \alpha$ ($h$ is decreasing).\\

\begin{proposition}
\label{faibleBDP1:finite}
Assume $M>1$ and the first moment is small: $4\bJ(0) < L M$. Assume in addition that $ n_0(z)$ is a non-increasing function. Then the solution to (\ref{eqfini}) with initial data $n(0,z) = n_0(z)$ blows-up in finite time.\\
 \end{proposition}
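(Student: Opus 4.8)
The plan is to follow the moment method used for Theorem~\ref{th:1D BU}: under the stated hypotheses I will show that the first moment $\bJ(t) = \int_0^L z\, n(t,z)\D z$ satisfies a differential inequality that forces it to vanish in finite time, which is impossible for a nonnegative density of fixed positive mass $M$. Throughout I write $a(t) = n(t,0) - n(t,L)$ for the advection coefficient.

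First I would propagate monotonicity exactly as in Theorem~\ref{th:1D BU}. The derivative $v = \partial_z n$ solves the linear parabolic equation $\partial_t v = \partial_{zz} v + a(t)\partial_z v$, which carries no zeroth-order term, and the zero-flux conditions give the boundary values $v(t,0) = -a(t)\,n(t,0)$ and $v(t,L) = -a(t)\,n(t,L)$. Since $n_0$ is non-increasing, $v(0,\cdot)\le 0$; keeping $a(t)\ge 0$ along the way by a continuity argument, these boundary values stay $\le 0$, and the maximum principle yields $v(t,\cdot)\le 0$ for all $t$. Thus $n(t,\cdot)$ remains non-increasing and $a(t)\ge 0$.

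Next I would compute the moment evolution. Testing the equation against $z$ and using that the inner flux $\partial_z n + a(t)\,n$ vanishes at both endpoints, the boundary terms telescope and one obtains the clean identity
\[
\frac{\D}{\D t}\bJ(t) = (1-M)\,\bigl(n(t,0) - n(t,L)\bigr).
\]
Since $M>1$ and $a(t)\ge 0$, the moment $\bJ$ is non-increasing, hence $\bJ(t)\le \bJ(0) < LM/4$ for all $t$ by the smallness hypothesis $4\bJ(0) < LM$.

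The core is the finite-interval analogue of the interpolation inequality (\ref{interpol}). As $-\partial_z n/a(t)$ is a probability density on $(0,L)$, Jensen's inequality for $z\mapsto z^2$, followed by two integrations by parts, gives
\[
\bigl(M - L\,n(t,L)\bigr)^2 \le a(t)\,\bigl(2\bJ(t) - L^2 n(t,L)\bigr) \le 2\,a(t)\,\bJ(t).
\]
The extra boundary term $n(t,L)$, absent on the half-line, is controlled through monotonicity: $\bJ(t)\ge n(t,L)\int_0^L z\D z = n(t,L)\,L^2/2$, so $L\,n(t,L)\le 2\bJ(t)/L < M/2$ by the smallness assumption, whence $M - L\,n(t,L) > M/2$. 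Combining these bounds yields $a(t) \ge M^2/\bigl(8\bJ(t)\bigr)$, and therefore
\[
\frac{\D}{\D t}\bJ(t)^2 = -2(M-1)\,\bJ(t)\,a(t) \le -\frac{(M-1)M^2}{4}.
\]
Thus $\bJ(t)^2$ would have to become negative in finite time, a contradiction; the maximal time of existence is finite, which is the asserted blow-up (the singularity of the modulus of integrability at $T^*$ being argued verbatim as at the end of Section~\ref{sec:BU}).

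The main obstacle, compared with the half-line proof, is precisely the nonzero trace $n(t,L)$, which pollutes both the moment identity and the interpolation inequality. The smallness condition $4\bJ(0) < LM$ is exactly what is needed to absorb it: it keeps $L\,n(t,L)$ below $M/2$ via the monotonicity bound, preserving the quadratic-in-$\bJ$ dissipation estimate that drives the contradiction.
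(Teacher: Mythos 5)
Your proof is correct and follows essentially the same route as the paper's: propagate monotonicity by the maximum principle, apply Jensen's inequality to the probability density $-\partial_z n/(n(t,0)-n(t,L))$ to obtain $(M-Ln(t,L))^2\le (n(t,0)-n(t,L))\,(2\bJ(t)-L^2 n(t,L))$, and combine this with the moment identity to force $\bJ(t)^2$ to decrease at a constant rate. The only cosmetic difference is in how the trace $n(t,L)$ is absorbed: you bound $L\,n(t,L)\le 2\bJ(t)/L< M/2$ directly from monotonicity and the smallness hypothesis to get $a(t)\bJ(t)\ge M^2/8$, while the paper keeps these terms and arrives at the equivalent differential inequality $\frac{\D}{\D t}\bJ(t)\le \frac{M(1-M)}{2\bJ(t)}\bigl(M-4\bJ(t)/L\bigr)$.
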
 
\begin{proof}
We proceed again as in the proof of theorem (\ref{th:1D BU}).
From Jensen's inequality, it follows that:
\[\left(\int_{0}^L z \frac{-\partial_z n(t,z)}{n(t,0)-n(t,L)}\D  z\right)^2  \leq 
\int_{0}^L z^2 \frac{-\partial_z n(t,z)}{n(t,0)-n(t,L)}\D  z\, , \]
hence, using that $n(t,0)>n(t,L)$ for any time $t>0$, we deduce that
\begin{equation}\label{interpolfini}
(M-Ln(t,L))^2  \leq  (n(t,0)-n(t,L)) \Big( 2\int_{0}^L z   n(t,z)  \D  z-L^2 n(t,L)\Big)  \, ,
\end{equation}
and the inequality remains true when $n(t,0)=n(t,L)$ and $n(t,\cdot)$ is constant.
Therefore, the first momentum  $\bJ(t) = \int_{0}^L z n(t,z)\D  z$ satisfies:
\begin{eqnarray*}
\frac{\D }{\D t} \bJ(t)  & =& (1-M)(n(t,0) -  n(t,L))  \\
& \leq &(1-M) \frac{(M - Ln(t,L))^2}{2\bJ(t) - L^2 n(t,L)} \\
& \leq &(1-M) \frac{M^2 - 2M L n(t,L)}{2\bJ(t)}\, .
\end{eqnarray*}
On the other hand, from (\ref{interpolfini}) again, it follows that $2\bJ(t) \ge L^2 n(t,L)$ and we deduce that
\[\frac{\D }{\D t} \bJ(t)  \leq  \frac{M(1-M)}{2\bJ(t)} \left(M - \frac{4\bJ(t)}{L} \right)\, ,\] 
and the result follows by contradiction as in Section \ref{sec:BU}.
\end{proof}

\section{The model with with dynamical exchange of markers at the boundary: prevention of blow-up and asymptotic behaviour} \label{sec:ODE/PDE}

In Section \ref{sec:BU}, we proved that finite blow-up occurs in the basic model (\ref{eq:1D}) when mass is super-critical $M>1$. On the other hand the model which was originally proposed in \cite{HBPV} is the following:
\[\left\{\begin{array}{l} 
\partial _t n (t,z)=  \partial _{zz} n (t,z) +   \mu (t) \partial _z n (t,z) \, , \quad t >0\, , \, z\in (0,+\infty) \medskip \\ 
\frac{\D }{\D t}\mu(t)=   n(t,0)-  \mu(t)\, ,
\end{array}\right.\]
together with the flux condition at the  boundary:
\begin{equation} 
  \partial _z n (t,0)+  \mu(t) n (t,0)=
\frac{\D }{\D t}\mu(t)\, . \label{eq:BC2dim}
\end{equation}
The quantity $\mu$ represents the concentration of markers which are sticked to the boundary and thus create the attracting drift. 
The dynamics of $\mu$ is driven by simple attachment/detachment kinetics. The mass of molecular markers is shared between the free particles $n(t,z)$ and the particles on the boundary $\mu(t)$. The boundary condition (\ref{eq:BC2dim}) guarantees conservation of the total mass:
\begin{equation} 
\int_{z>0} n(t,z)\D z + \mu(t) = M\, . \label{eq:mass conservation dim}\end{equation} 
From (\ref{eq:mass conservation dim}), we easily deduce that finite time blow-up cannot occur since the drift $\mu(t)$ is bounded by $M$.  
We denote by $m(t)$ the mass of free particles: 
\begin{equation*}
m(t) = \int_{z>0} n(t,z)\D z\, .
\end{equation*}
The conservation of mass reads 
\[
\frac{\D }{\D t}m(t) + \frac{\D }{\D t}\mu(t) = 0\, .\]
We re-define the relative entropy as follows:
\[\bH(t)= \int_{z>0}\frac{n(t,z)}{m(t)h  (z)} \log \left(\frac{n(t,z)}{m(t)h(z)}\right)h (z)\D  z\, ,\]
where the asymptotic profile $h$ is given by:
\[h(z) = \nu \exp\left(-\nu z\right)\, , \quad \nu = M-1\, . \]
When mass is super-critical $M>1$, we shall prove that the density of free markers $n(t,z)$ converges in relative entropy towards  $h$, whereas the concentration of markers sticked at the boundary $\mu(t)$ converges to $\nu$. 
This is achieved using a suitable Lyapunov functional as in Sections \ref{sec:M=1} and \ref{sec:self-similar}. We introduce accordingly
\[\bL(t) = m(t)\bH(t)   + \frac{1}{2}\left(\mu(t) - \omu\right)^2 + \mu(t)\log\left(\frac{ \mu(t)}{\nu}\right)   + m(t)\log m(t)  \, .\]
The rest of this Section is devoted to the proof of the following Lemma.\\

\begin{lemma}\label{eq:dissipation mu}
The Lyapunov functional $\bL$ is non-increasing:
\[\frac{\D}{\D t} \bL(t ) = - \bD(t ) \leq 0\, . \]
The dissipation reads as follows
\begin{eqnarray*}
\bD(t)  &=& \int_{z>0} n(t,z) \left(\partial _z\log n(t,z) + \frac{n(t,0)}{m(t)}\right)^2\D  z + m(t)\left(\frac{n(t,0)}{m(t)} -   \mu(t) \right)^2\\
& &+   \left( n(t,0) -  \mu(t) \right) \log \left(\frac{n(t,0)}{\mu(t)}\right)  + \mu(t) \left( \mu(t)- \omu \right)^2\, .
\end{eqnarray*}
\end{lemma}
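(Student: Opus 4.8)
The plan is to recast $\bL$ in a transparent form, differentiate term by term along the flow, and then reorganise the outcome using the conservation law.

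\textbf{Step 1 (simplify $\bL$).} Expanding the definition of $\bH$ and using $\int_{z>0}n\,\D z = m(t)$ together with $\log h(z)=\log\nu-\nu z$, I would check that the $m(t)\log m(t)$ contributions cancel, so that $\bL$ reduces to the compact form
\[
\bL(t) = \int_{z>0} n(t,z)\log\frac{n(t,z)}{h(z)}\,\D z + \frac12\big(\mu(t)-\nu\big)^2 + \mu(t)\log\frac{\mu(t)}{\nu}\, .
\]
This is a genuine relative entropy of $n$ against the unnormalised profile $h$, supplemented by two scalar contributions governed by the ODE for $\mu$.

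\textbf{Step 2 (differentiate).} Writing the PDE in flux form $\partial_t n = \partial_z J$ with $J=\partial_z n+\mu n$, I would compute $\frac{\D}{\D t}\int n\log(n/h)\,\D z = \int\partial_t n\,(\log(n/h)+1)\,\D z$ and integrate by parts once. The $+1$ contribution integrates to $\frac{\D}{\D t}m = \mu-n(t,0)$, while the boundary term at $z=0$ is $-J(t,0)\log(n(t,0)/\nu)$; here the flux condition (\ref{eq:BC2dim}) and the ODE yield the crucial identity $J(t,0)=\frac{\D}{\D t}\mu = n(t,0)-\mu(t)$, which is exactly where the coupling with $\mu$ enters. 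Using $\partial_z\log h=-\nu$, $\int\partial_z n\,\D z=-n(t,0)$ and $\int n\,\D z=m$, the interior integral produces the Fisher-type term $-\int\frac{(\partial_z n)^2}{n}\D z$ plus several algebraic terms. The two scalar pieces are differentiated directly via $\frac{\D}{\D t}\mu=n(t,0)-\mu$.

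\textbf{Step 3 (match and check the sign).} I would expand the squares in $\bD$: the Fisher-type term completes as $\int n(\partial_z\log n+n(t,0)/m)^2 = \int\frac{(\partial_z n)^2}{n}-n(t,0)^2/m$ (using $\int n\,\partial_z\log n=-n(t,0)$), and $m(n(t,0)/m-\mu)^2=n(t,0)^2/m-2n(t,0)\mu+m\mu^2$, so the two $n(t,0)^2/m$ terms cancel. Comparing with $\frac{\D}{\D t}\bL$, the Fisher integrals coincide; the logarithmic remainders from the relative entropy and from $\mu\log(\mu/\nu)$ combine into exactly $-(n(t,0)-\mu)\log(n(t,0)/\mu)$; and the algebraic remainder $2\mu n(t,0)-\nu\mu m-\mu^2+\nu\mu$ has to equal $2n(t,0)\mu-m\mu^2-\mu(\mu-\nu)^2$. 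Eliminating $m$ through the conservation relation $m+\mu=M=\nu+1$ reduces both to the same polynomial in $\mu,\nu$, establishing $\frac{\D}{\D t}\bL=-\bD$. Non-negativity of $\bD$ is then immediate: the first term is an integral of a square, the second a square, the fourth equals $\mu(\mu-\nu)^2\ge0$ since $\mu\ge0$, and the third is of the form $(a-b)\log(a/b)$ with $a=n(t,0)\ge0$, $b=\mu>0$, hence non-negative because $\log$ is increasing.

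The main obstacle lies in Steps 2--3: correctly generating and tracking the boundary term $-J(t,0)\log(n(t,0)/\nu)$, since it is precisely the coupling through the flux condition that makes the cross terms cancel against the derivative of $\mu\log(\mu/\nu)$, and then the somewhat delicate algebraic bookkeeping in which the constraint $m+\mu=\nu+1$ must be invoked to reconcile the polynomial remainder with the completed squares defining $\bD$. The integration by parts also tacitly requires enough decay of $n$ and of its flux $J$ as $z\to+\infty$, which I would justify from the $L^1$ and entropy bounds available for the solution.
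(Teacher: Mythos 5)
Your proposal is correct and follows essentially the same route as the paper: differentiate the relative entropy, integrate by parts so that the flux condition $J(t,0)=\frac{\D}{\D t}\mu(t)=n(t,0)-\mu(t)$ produces the logarithmic coupling term, complete the square with $n(t,0)/m(t)$, and close the algebra via $m(t)+\mu(t)=M=\nu+1$. Your Step 1 rewriting of $\bL$ as $\int n\log(n/h)+\tfrac12(\mu-\nu)^2+\mu\log(\mu/\nu)$ is only a cosmetic streamlining of the paper's bookkeeping (which carries $m\bH$, $m\log m$ and $m\log\nu$ separately), and your verified polynomial identity matches the paper's term-by-term reorganisation.
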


\begin{proof}
We compute below the time evolution of the relative entropy. This is strongly inspired from the previous computation, but this takes into consideration the non-conservation of mass for the free markers density and the additional dynamics of $\mu$.
\begin{eqnarray*}
\frac{\D }{\D t} \left( m(t)\bH(t)\right)
& =& \frac{\D }{\D t}\int_{z>0} n(t,z) \left( \log\left(\frac{n(t,z)}{m(t)}\right) -\log \omu +\omu z\right)\D  z\\
&= &\int_{z>0} \partial _t\left( n(t,z) \right)\left( \log\left(\frac{n(t,z)}{m(t)} \right) -\log \omu +\omu z\right)\D  z \\
& &\qquad+\int_{z>0} n(t,z)\,  \partial _t \, \log  \left(\frac{n(t,z)}{m(t)}\right)\D  z\\
&= & \int_{z>0}\partial _z\left( \partial _z n(t,z) +\mu(t) n(t,z) \right) \left( \log\left(\frac{n(t,z)}{m(t)} \right)  +\omu z\right)\D  z\\
& &\qquad- \frac{\D }{\D t} m(t) \log \omu \, ,
\end{eqnarray*}
where we have used the identity
\[ \int_{z>0} n(t,z)\,  \partial _t \, \log  \left(\frac{n(t,z)}{m(t)}\right)\D  z= m(t) \int_{z>0} \partial_t\left(\frac{n(t,z)}{m(t)}\right)\D  z = 0\, .\]
We integrate by parts to get
\begin{eqnarray*}
\frac{\D }{\D t} \left( m(t)\bH(t)+m(t)\log \omu \right)
&=& - \int_{z>0}(\partial _z n(t,z) +\mu(t) n(t,z)) \left( \frac{\partial _z n(t,z)}{n(t,z)}    +\omu \right)\D  z\\
& &\qquad - \left(\partial _z n (t,0)+\mu(t) n(t,0)\right) \log\left(\frac{n(t,0)}{m(t)} \right)\\
&=& -  \int_{z>0} n(t,z) \left(\partial _z\log n(t,z)\right)^2\D  z + (\omu +  \mu(t) ) n(t,0)\\
& &\qquad -  m(t)\mu(t) \omu -   \log\left(\frac{n(t,0)}{m(t)} \right)\frac{\D }{\D t} \mu(t)\, .
\end{eqnarray*}
We use again the following key identity
\begin{eqnarray*}
\int_{z>0} n(t,z) \left(\partial _z\log n(t,z)\right)^2\D z =    
\int_{z>0} n(t,z) \left(\partial _z\log n(t,z) + \frac{n(t,0)}{m(t)}\right)^2\D  z +  \frac{n(t,0)^2}{m(t)}\, . 
\end{eqnarray*}
We end up with the following expression for the dissipation of the corrected entropy,
\begin{eqnarray*}
\frac{\D }{\D t} \left( m(t)\bH(t) +m(t)\log \omu \right) 
& = &- \int_{z>0} n(t,z) \left(\partial _z\log n(t,z) + \frac{n(t,0)}{m(t)}\right)^2\D  z  \\ 
& &\qquad   - \frac{ n(t,0)^2}{m(t)}   + (\omu +  \mu(t))  n(t,0)  -  m(t) \mu(t) \omu \\
& &\qquad -    \log\left(\frac{n(t,0)}{m(t)}\right)\frac{\D }{\D t}\mu(t)\, .
\end{eqnarray*}
On the first hand, we have that
\begin{eqnarray*}
- \frac{ n(t,0)^2}{m(t)}   + (\omu +  \mu(t))  n(t,0)  -  m(t) \mu(t) \omu
& = &\left( - \frac{ n(t,0)}{m(t)} + \omu \right)\left( n(t,0) - m(t)\mu \right) \\
& = & - m(t)\left(\frac{n(t,0)}{m(t)} -   \mu(t) \right)^2  \\
& & -   ( \mu(t)- \omu)\left( n(t,0) - m(t)\mu(t) \right) \, ,
\end{eqnarray*}
and on the other hand, we see that
\begin{eqnarray*}
& &- \log\left(\frac{n(t,0)}{m(t)}\right)\frac{\D }{\D t}\mu(t)\\
& & \quad =- \log\left(\frac{n(t,0)}{\mu(t)}\right)\frac{\D }{\D t}\mu(t)
- \log\left(\frac{\mu(t)}{m(t)}\right)\frac{\D }{\D t}\mu(t) \\
&  & \quad =
- \left(n(t,0) - \mu(t)\right) \log\left(\frac{n(t,0)}{\mu(t)}\right)  
- \log\left( \mu(t) \right)\frac{\D }{\D t}\mu(t) - \log\left( m(t) \right)\frac{\D }{\D t}m(t)
\\
& & \quad = - \left(n(t,0) - \mu(t)\right) \log\left(\frac{n(t,0)}{\mu(t)}\right) 
\\
& &\qquad 
- \frac{\D }{\D t}\left( \mu(t) \log  \mu(t)   - \mu(t)  + m(t) \log  m(t)    - m(t) - \nu \log \nu + M \right) \, .
\end{eqnarray*}
The last contribution to be reformulated is
\begin{eqnarray*}
 -   ( \mu(t)- \omu)\left( n(t,0) - m(t)\mu(t) \right) 
 & = &- ( \mu(t)- \omu)\left( \frac{\D }{\D t}\mu(t) + (1 - m(t)) \mu(t)  \right)\\
& =& - ( \mu(t)- \omu)\left( \frac{\D }{\D t}\mu(t) + (\mu(t) - \omu)\mu(t) \right) \\
& = &- \frac{1}{2}\frac{\D }{\D t} ( \mu(t)- \omu)^2 - \mu(t) (\mu(t) - \omu) ^2\, .
\end{eqnarray*}
Combining all these calculations we conclude the proof of Lemma \ref{eq:dissipation mu} 
\end{proof}

Following the lines of Section \ref{sec:M=1} we can prove that $\mu(t)$ converges to $\omu$,  the partial mass $m(t)$ converges to 1, and the density $n(t,\cdot)$ converges to the stationary state $h$ as $t\to \infty$. We omit the details.

\section{The higher dimensional case $N\geq 2$}\label{secdimsup}

In this section we investigate the possible behaviours of the equation (\ref{eq:2D model})  in dimension $N\geq 2$ with the two possible choices (\ref{eq:u1}) and (\ref{eq:u2}) for the advection field.  

\subsection{Global existence}

We give the proof of Theorem \ref{thdim2}. Since many of the arguments are similar to the one-dimensional case, we only sketch the proof and focus on the propagation of $L^p$ bounds, which is the crucial {\em a priori} estimate as soon as entropy methods are lacking \cite{JL}.

Let $n$ be a solution of (\ref{eq:2D model}) with $\nabla\cdot \bu \geq 0$ and $\bu(t,y,0)\cdot \be_z = n(t,y,0)$. We see that
\begin{eqnarray}
 \frac{\D }{\D t} \int_{\mH} n(t,x)^p \D x
 &=&  -p \int_{\mH} \nabla  n(t,x)^{p-1} \cdot \nabla n(t,x) \D x  \nonumber
 \\& &  +p \int_{\mH} \nabla  n(t,x)^{p-1} \cdot \mathbf{u}(t,x)\, n(t,x) \D x\, .
 \label{relation2}
\end{eqnarray}
On the first hand, we have that
\[-p \int_{\mH} \nabla  n(t,x)^{p-1} \cdot  \nabla n(t,x) \D x = -\frac{4(p-1)}{p} \int_{\mH}\left| \nabla n(t,x)^{p/2}\right|^2 \D x\, ,\]
and on the other hand,
\[\frac{p}{p-1} \int_{\mH} \nabla  n(t,x)^{p-1} \cdot \mathbf{u}(t,x)\, n(t,x) \D x 
=  - \int_{\mH}   n(t,x)^{p} \left(\nabla\cdot \bu\right)\D x + \int _x  n(t,y,0)^{p+1} \D y \, . 
\]
To estimate the two opposite trends in (\ref{relation2}) we use the following Sobolev trace inequality \cite{Biezuner} and \cite{Nazaret}: there exists a constant $C_r$ such that for any non-negative $f\in W^{1,r}$ we have,
\begin{equation}\label{GS0}
\left(  \int_{y\in \R^{N-1}  } f (y,0)^{r^*} \D y \right)^{1/r^*} \le C_r \left(\int_{\mH}\left| \nabla f(x)\right|^r \D x \right)^{1/r}\, ,
\end{equation} 
where $ r^*=\frac{(N-1)r}{N-r}$.
Applying the previous inequality (\ref{GS0}) with $f=n^s$, we obtain the estimates:
\begin{eqnarray*}
 & &\int_{y\in \R^{N-1} } n(t,y,0)^{r^* s} \D y  
 \le  C_r  \left(\frac{2s}{p}\right)^{r^*} \left(\int_{\mH}\left| \nabla  n(t,x)^{p/2} \, n(t,x)^{s-\frac{p}{2}}\right|^r \D x \right)^{r^*/r}
\\
&  & \qquad \leq C_r \left(\frac{2s}{p}\right)^{r^*} \left(\int_{\mH}\left| \nabla n(t,x)^{p/2} \right| ^2 \D x \right)^{r^*/2}  \left(\int_{\mH} \left( n(t,x)^{s-\frac{p}{2}}\right) ^{\frac{2r}{2-r}} \D x \right)^{\frac{(2-r)r^*}{2r}}\, .
\end{eqnarray*} 
We infer that $L^N$ is the critical space for global existence. Hence we choose
\[\left(s-\frac{p}{2}\right)\frac{2r}{2-r}=N\, .\]
On the other hand, we also require that 
\[1 = \frac{r^*}{2}= \frac{1}{2} \frac{(N-1)r}{N-r}\, .\]
A straightforward computation leads to 
\[r=\frac{2N}{N+1}\, , \quad s=\frac{p+1}{2}\,,\quad r^*s = p+1\, ,\quad \frac{(2-r)r^*}{2r} = \frac{1}{N}\,  .\]
Therefore we deduce that 
\[\frac{\D }{\D t} \int_{\mH}  n(t,x) ^p \D x
\le -\frac{4(p-1)}{p} \left( 1- C  \|n(t)\|_{L^N}\right) \int_{\mH}\left| \nabla n(t,x)^{p/2} \right|^2 \D x 
\, .
\]
The peculiar choice $p = N$
yields global existence  if  $\|n(0)\|_{L^N}$ is smaller than some explicit threshold as in \cite{Corrias.Perthame.Zaag}.

\subsection{Blow-up of solutions in the first case (\ref{eq:u1})}\label{eq:sec:BU u1}

We compute the evolution of the second momentum $\bI(t)= \frac12 \int_{\mH}  |x|^2 n(t,x) \D  x$ as for the classical Keller-Segel system (see \cite{P} and references therein): 
\[\frac{\D \bI(t) }{\D t} 
 = N M 
 -\int_{\mH} z n(t,y,0) n(t,y,z)   \D x\, . 
\]
Next, define $M(t,y) =\int_{z>0} n(t,y,z)\D  z$. Under the assumption $\partial_z n(t,x) \leq 0$ for all $x\in \mH$ and $t>0$, inequality (\ref{interpol}) rewrites 
\[ M(t,y)^2  \leq 2 n(t,y,0)  \int_{z>0} z   n(t,y,z) \D  z \, .
\]
We deduce that:
\begin{eqnarray}\label{relationJ1}
 \frac{\D \bI(t) }{\D t} &\le&  N M  -\frac{1}{2}\|M(t,y)\|^2_{L^2}.
\end{eqnarray}
By interpolation there exists a constant $C$ such that 
\begin{equation}\label{eq:interp2D} 
M^{\frac{N+3}{2}} \leq C \bI(t)^{\frac{N-1}2}\|M(t,y)\|_{L^2}^2\, . 
\end{equation}
Indeed we have
\begin{eqnarray*}
M
&=&  \int _{|y|< R} M(t,y) \D y + \int _{|y|>R} M(t,y) \D y \\
& \le & C   R^{(N-1)/2} \left( \int _{R^{N-1} }  M(t,y)^2 \D y\right)^{1/2} +R^{-2}  \int  _{R^{N-1} }  |y|^2M(t,y) \D y  \\
&\le & C  R^{(N-1)/2} \|M(t,y)\|_{L^2} + R^{-2}  \bI(t)\, .
\end{eqnarray*}
Optimizing with respect to $R$ we get (\ref{eq:interp2D}). Combining (\ref{eq:interp2D}) and (\ref{relationJ1}) we conclude that the solution blows-up in finite time if $\bI(0)\leq C M^{\frac{N+1}{N-1}}$.

\subsection{Blow-up in the second case (\ref{eq:u2})}

We recall the expression of the advection field in the potential case (\ref{eq:u2}):
\[ \bu(t,x) = - \int_{y'\in \R^{N-1}} \frac{(y-y',z)}{\left(|y-y'|^2 + z^2\right)^{N/2}} n(t,y',0)\D  y'\, .  \] 
Therefore we have
\begin{eqnarray*}
 \frac{\D \bI(t) }{\D t} & =& N M + \int_\mH x\cdot\left( n(t,x) \bu(t,x) \right)\D  x \\
  & = &N M  -\iint  _{y,y'}\int _{z>0} \frac{y\cdot (y - y') + z^2}{\left(|y-y'|^2 + z^2\right)^{N/2}} n(t,y',0) n(t,y,z) \D  y \D y' \D z \, .
\end{eqnarray*}
We use a symmetrization trick to evaluate the contribution of interaction:
\begin{eqnarray*} 
& &\iint _{y,y'}\int _{z>0} \frac{y\cdot (y - y') }{\left(|y-y'|^2 + z^2\right)^{N/2}} n(t,y',0) n(t,y,z) \D  y \D y' \D z =\\ 
& &  
\frac{1}{2}\iint _{y,y'}\int _{z>0} \frac{y - y' }{\left(|y-y'|^2 + z^2\right)^{N/2}}\cdot \left(n(t,y',0) n(t,y,z) y - n(t,y,0) n(t,y',z) y'\right) \D  y \D y' \D z \, .
\end{eqnarray*} 

\begin{lemma}
Let $f$ be a smooth positive function. Assume that we have both $\partial_z f(x) \leq 0$ and
\begin{equation}  
\forall z>0\, , \, \forall y \in \R^{N-1}\, , \, \forall h\in \R^{N-1}    \quad (h\cdot y)\left( h\cdot \partial_{z}\nabla_y\log f(x)  \right)\geq 0 \, . \label{eq:cond2}
\end{equation}
Then for all $y,y' \in \R^{N-1}$ and for all $z>0$, the following inequality holds true:
\begin{equation}  
(y-y')\cdot\left(  f( y',0) f(y,z) y  - f( y,0) f( y',z)y'\right) \geq |y-y'|^2 f(y,z)f(y',z)\, . \label{eq:cond1}
\end{equation}
\end{lemma}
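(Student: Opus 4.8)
The plan is to divide the target inequality (\ref{eq:cond1}) by the strictly positive quantity $f(y,z)f(y',z)$ and thereby reduce it to a purely geometric statement about the single-variable function $\Phi(y) := f(y,0)/f(y,z)$ (with $z$ held fixed). Writing $\alpha = \Phi(y)-1$ and $\beta = \Phi(y')-1$, the division turns (\ref{eq:cond1}) into
\[ (y-y')\cdot\left( \beta\, y - \alpha\, y' \right) \geq 0\, , \]
because the part of the left-hand side with unit coefficients reproduces exactly $|y-y'|^2$. Note first that $\alpha,\beta\geq 0$: the hypothesis $\partial_z f\leq 0$ forces $f(\cdot,0)\geq f(\cdot,z)$, hence $\Phi\geq 1$.

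Next I would symmetrise the remaining bilinear form by means of the elementary identity
\[ (y-y')\cdot(\beta\, y - \alpha\, y') = \frac{\alpha+\beta}{2}\,|y-y'|^2 + \frac{\beta-\alpha}{2}\left( |y|^2 - |y'|^2 \right)\, . \]
The first term is manifestly non-negative, so the whole problem reduces to proving $(\beta-\alpha)(|y|^2-|y'|^2)\geq 0$, that is, to showing that $\Phi$ is a \emph{radially non-increasing} function of the tangential variable $y$.

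The heart of the argument is to extract this radial monotonicity from hypothesis (\ref{eq:cond2}). I would set $\psi(y,z) = \log f(y,0) - \log f(y,z)\geq 0$, so that $\Phi = \exp\psi$, and write
\[ \nabla_y\psi(y,z) = -\int_0^z \partial_s\nabla_y\log f(y,s)\,\D s\, . \]
Testing (\ref{eq:cond2}) with $h=y$ already gives $y\cdot\partial_s\nabla_y\log f\geq 0$, hence the radial derivative $y\cdot\nabla_y\psi\leq 0$. Using (\ref{eq:cond2}) for \emph{every} $h$, and the algebraic fact that the quadratic form $h\mapsto (h\cdot y)(h\cdot w)$ can be non-negative for all $h$ only if $w$ is a non-negative multiple of $y$, I would conclude that $\partial_s\nabla_y\log f(y,s)$ is parallel to $y$ at each $s$, and therefore so is $\nabla_y\psi(y,z)$. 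Consequently $\psi$ has vanishing derivative in every direction tangent to the spheres $\{|y|=\text{const}\}$, so $\psi(y,z)=\rho(|y|,z)$ with $\partial_r\rho\leq 0$; thus $\Phi$ is radially non-increasing and $(\beta-\alpha)(|y|^2-|y'|^2)\geq 0$, which finishes the proof.

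The main obstacle is precisely this last step. One must show that the pointwise matrix hypothesis (\ref{eq:cond2}) encodes full radial symmetry of $\Phi$, and not merely monotonicity along rays: comparing $\Phi(y)$ and $\Phi(y')$ for points of different directions and different moduli genuinely requires $\Phi$ to be constant on spheres, and monotonicity along individual rays would not suffice. The observation that $(h\cdot y)(h\cdot w)\geq 0$ for all $h$ forces $w\parallel y$ is what bridges this gap, and the care needed is that it exploits the hypothesis for every $h$ rather than the single convenient choice $h=y$.
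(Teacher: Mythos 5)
Your reduction and symmetrisation are algebraically correct, and the observation that $(h\cdot y)(h\cdot w)\geq 0$ for \emph{all} $h$ forces $w$ to be a non-negative multiple of $y$ is valid; for $N-1\geq 2$ this does yield that $\Phi(\cdot)=f(\cdot,0)/f(\cdot,z)$ is constant on spheres and non-increasing in $|y|$, and your proof then goes through. The genuine gap is the case $N=2$, i.e.\ $y\in\R$, which is covered by the Lemma and needed for Theorem \ref{th2dim2}. There the ``sphere'' $\{|y|=r\}$ is the disconnected pair $\{r,-r\}$, so the parallelism of $\nabla_y\psi$ with $y$ only gives monotonicity of $\Phi$ along each of the two rays separately and says nothing about how $\Phi(r)$ compares with $\Phi(-r')$. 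Your target inequality $(\beta-\alpha)(|y|^2-|y'|^2)\geq 0$ can then genuinely fail under the hypotheses: take $\log f(y,z)=g(y)-z\,k(y)$ with $k\geq 0$ maximal at $y=0$, non-increasing in $|y|$ on each ray, but decaying much faster on the negative ray than on the positive one; then (\ref{eq:cond2}) and $\partial_z f\leq 0$ hold, yet for $y>0>y'$ with $|y|>|y'|$ one gets $\Phi(y)=e^{zk(y)}>e^{zk(y')}=\Phi(y')$, so $\beta-\alpha<0$ while $|y|^2-|y'|^2>0$. The Lemma is still true in that example --- the always non-negative term $\tfrac{\alpha+\beta}{2}|y-y'|^2$ of your identity absorbs the negative one --- which pinpoints the structural issue: discarding the first term and demanding the second be non-negative pointwise is a strictly stronger requirement than the bilinear inequality you actually need, and the hypotheses do not imply it in dimension $N-1=1$.

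For comparison, the paper normalises differently: dividing (\ref{eq:cond1}) by $f(y,0)f(y',0)$ and then by $(1-a)(1-b)$, with $a=f(y,z)/f(y,0)$ and $b=f(y',z)/f(y',0)$, reduces it \emph{exactly} to the monotonicity of the vector field $y\mapsto \frac{a(y)}{1-a(y)}\,y$, which is then verified by showing that the symmetric part of its Jacobian is non-negative using (\ref{eq:cond2}); this argument is insensitive to the connectedness of spheres and works uniformly for all $N\geq 2$. To repair your proof you would either have to restrict to $N\geq 3$, or keep both terms of your symmetrisation identity and estimate their sum --- at which point you are essentially reproving the monotonicity of the paper's vector field.
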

\begin{proof}
Inequality (\ref{eq:cond1}) rewrites as follows:
\[ (y - y')\cdot \left(   \frac{f(y,z)}{f(y ,0)}\left(1 - \frac{f(y',z)}{f(y',0)}\right) y  -  \frac{f(y',z)}{f(y',0)}\left(1 - \frac{f(y,z)}{f(y,0)}\right) y'  \right) \geq 0\, .  \]
Since $\partial_z f(x) \leq 0$ we have both $f(y ,z)\leq f(y ,0)$ and $f(y' ,z)\leq f(y' ,0)$ for all $y,y',z$. Hence we are reduced to prove that the vector field 
\[ \frac{    \frac{f(y ,z)}{f(y ,0)}}{1 - \frac{f(y ,z)}{f(y ,0)}} y \,,   \]
is monotonic with respect to the $y$ variable. Computing the derivative with respect to $y$, it is straightforward to check that it is monotonic if (\ref{eq:cond2}) is satisfied:
\begin{align*}
\nabla_y \left(\frac{    \frac{f(y ,z)}{f(y ,0)}}{1 - \frac{f(y ,z)}{f(y ,0)}} y\right)& = \left( \frac{    \frac{f(y ,z)}{f(y ,0)}}{\left(1 - \frac{f(y ,z)}{f(y ,0)}\right)^2}\right)\left( \frac{\nabla_y f(y,z)}{f(y,z)} - \frac{\nabla_y f(y,0)}{f(y,0)} \right) \otimes y \\
&\quad \quad + \left(\frac{f(y,z)}{f(y,0) - f(y,z)}\right) \mathrm{Id} \\
& \geq \left( \frac{    \frac{f(y ,z)}{f(y ,0)}}{\left(1 - \frac{f(y ,z)}{f(y ,0)}\right)^2}\right)\left( \int_{z' = 0}^z \partial_z \nabla _y\log f(y,z')\D z'  \right) \otimes y  \geq 0\, ,
\end{align*}
in the following matrix sense: $A^T + A \geq 0$.  
\end{proof}

Under the hypotheses of Theorem \ref{th2dim2} we assume that conditions (\ref{eq:cond1}) -- (\ref{eq:cond2}) are fulfilled for every time of existence.   We deduce that
\begin{eqnarray*}
 \frac{\D \bI(t) }{\D t} 
 &\leq & N M  - \frac{1}{2} \iint  _{y,y'}\int _{z>0} \frac{|y - y'|^2 + 2 z^2}{\left(|y-y'|^2 + z^2\right)^{N/2}} n(t,y',z) n(t,y,z) \D  y \D y' \D z \\
 & \leq & N M - \frac{1}{2} \iint _{y,y'}\int _{z>0} \frac{1}{\left(|y-y'|^2 + z^2\right)^{N/2-1}} n(t,y',z) n(t,y,z) \D  y \D y' \D z  \\
 \end{eqnarray*}
 Since $|y - y'|^2 + z^2 \leq 2|y|^2 + 2|y'|^2 + z^2$, and $n$ is non-negative, we have
 \begin{eqnarray*}
 \frac{\D \bI(t) }{\D t}  & \leq & N M - \frac{1}{2} \iiint   _{\left\{|y|<\frac{ R}{3}, |y'|<\frac{ R}{3}, z < \frac{2R}{3}\right\}} R^{2-N}  n(t,y',z) n(t,y,z) \D  y \D y' \D z \\ 
 & \leq & N M - \frac{R^{2-N}}{2} \int_{0<z<\frac{2R}{3}} \left( \int_{|y|<\frac{ R}{3}} n(t,y,z)\D  y \right)^2\D  z \\
 & \leq & N M - \frac{3 R^{1-N}}{4} \left( \int_{0<z<\frac{2R}{3}}   \int_{|y|<\frac{ R}{3}} n(t,y,z)\D  y\D  z \right)^2\, ,  
\end{eqnarray*}
where we have used the Cauchy-Schwarz inequality. We have therefore 
 \begin{eqnarray*}
 \frac{\D \bI(t) }{\D t}  &  \leq & N M - \frac{3 R^{1-N}}{4} \left( M -  \iint  _{\left\{z>\frac{2R}{3}\;\mbox {\footnotesize or}\;|y|>\frac{ R}{3}\right\}}     n(t,y,z)\D  y\D  z \right)^2\\
 &  \leq & N M - \frac{R^{1-N}}{2} M^2 + C R^{-N-3} \bI(t)^{2}\, ,
\end{eqnarray*}
because $R^2 < 9|x|^2$ on $\left\{z>\frac{2R}{3}\;\mbox {\footnotesize or}\;|y|>\frac R3\right\}$. 
Optimizing with respect to $R$, we conclude that the solution blows-up in finite time if $\bI(0)\leq C M^{\frac{N+1}{N-1}}$, similarly as in Section \ref{eq:sec:BU u1}.

\section{Conclusion}

Here, we have demonstrated that a class of models following \cite{HBPV} exhibit pattern formation (either blow-up or convergence towards a non homogeneous steady state) under some conditions. However we have not answered the main question: do they describe cell polarisation or not? Although the one-dimensional case is clear (spontaneous polarisation occurs if the total concentration of markers is large enough), the higher-dimensional situation is not so clear. Obviously the first model (\ref{eq:u1}) does not exhibit cell polarisation since we can integrate the equation (\ref{eq:2D model}) with respect to $z$, and we obtain for $\nu(t,y) = \int_{z>0} n(t,y,z) \D z$:
\[\partial_t \nu(t,y)  = \partial_{yy} \nu(t,y)\, .\]
Thus there is no transversal instability which is the main feature of spontaneous cell polarisation, that leads to symmetry breaking. On the other hand the  second model (\ref{eq:u2}) is expected to develop symmetry breaking as the tangential component of the advective field on the boundary is given by the Hilbert transform of the trace $n(t,y,0)$ which is known to enhance finite time aggregation at least in one dimension of space \cite{CPS}. However there is no clear mathematical distinction between the two models as continuation after the blow-up time appears to be very delicate in a similar context \cite{V1, V2,DS}. It would be very interesting to make such a distinction beyond linear analysis as performed in \cite{HBPV}. We leave it as an open question.

\medskip

\noindent{\em Acknowledgement: The authors are very grateful to M. Piel, J. Van Schaftingen and J.J.L. Vel\'azquez for stimulating discussion. VC and NM warmly thank the Centre de Recerca Matem\`atica (Barcelona) for the invitation during the special semester "Mathematical Biology: Modelling and Differential Equations" (2009).}

\bibliographystyle{plain}
\def\cprime{$'$} \def\lfhook#1{\setbox0=\hbox{#1}{\ooalign{\hidewidth
  \lower1.5ex\hbox{'}\hidewidth\crcr\unhbox0}}} \def\cprime{$'$}
  \def\cprime{$'$} \def\cprime{$'$} \def\cprime{$'$} \def\cprime{$'$}

\end{document}